\documentclass{amsart}
\usepackage{amssymb,hyperref}
\newtheorem{theorem}{Theorem}[section]

\newtheorem{definition}[theorem]{Definition}
\newtheorem{lemma}[theorem]{Lemma}
\newcommand{\Sym}{\mathop{\mathrm{Sym}}}
\newcommand{\Alt}{\mathop{\mathrm{Alt}}}
\newcommand{\Aut}{\mathop{\mathrm{Aut}}}
\newcommand{\PSL}{\mathop{\textrm{PSL}}}
\newcommand{\PGL}{\mathop{\mathrm{PGL}}}
\newcommand{\PGammaL}{\mathop{\mathrm{P}\Gamma\mathrm{L}}}

\newcommand{\Fix}{\mathop{\mathrm{Fix}}}
\newcommand{\GL}{\mathop{\mathrm{GL}}}
\newcommand{\Sz}{\mathop{\mathrm{Sz}}}

\newcommand{\AGL}{\mathop{\textrm{AGL}}}
\newcommand{\ASL}{\mathop{\textrm{ASL}}}
\newcommand{\soc}{\mathop{\textrm{soc}}}
\renewcommand{\wr}{\mathop{\textrm{wr}}}
\newcommand{\Sp}{\mathop{\textrm{Sp}}}

\newcommand{\fix}{\mathop{\mathrm{fix}}}
\newcommand{\orb}{\mathop{\mathrm{orb}}}
\def\norm#1#2{{\bf N}_{{#1}}{{(#2)}}}

\textheight 210 true mm
\textwidth 152 true mm
\voffset=-12mm
\hoffset=-16mm

\begin{document}

\title[Primitive Groups and hypergraphs]{Finite primitive groups and edge-transitive hypergraphs}

\author[P. Spiga]{Pablo Spiga}
\address{Pablo Spiga, Dipartimento di Matematica e Applicazioni,\newline
University of Milano-Bicocca,
 Via Cozzi 55 Milano, MI 20125, Italy} \email{pablo.spiga@unimib.it}

\thanks{Address correspondence to P. Spiga,
E-mail: pablo.spiga@unimib.it}

\begin{abstract}
We determine all finite primitive groups that are automorphism groups of edge-transitive hypergraphs. This gives an answer to a problem proposed by Babai and Cameron.\\
\begin{center}
\textit{Dedicated in the memory of \'{A}kos Seress}
\end{center}
\end{abstract}
\subjclass[2010]{20B15, 20H30}

\keywords{uniform hypergraph, edge-transitive, primitive group, automorphism group of set systems}
\maketitle
\section{Introduction}\label{introduction}

A \textit{hypergraph} is a pair $\mathcal{H}=(\Omega,\mathcal{E})$, where $\Omega$ is a set and $\mathcal{E}$ is a set of subsets of $\Omega$. The elements of $\Omega$ are called \textit{vertices} and the elements of $\mathcal{E}$ are called \textit{edges}. The hypergraph $\mathcal{H}$ is called $r$-\textit{uniform} if $|E|=r$ for every $E\in \mathcal{E}$, and \textit{uniform} if $\mathcal{H}$ is $r$-uniform for some $r$. (Clearly,  $2$-uniform hypergraphs are the usual simple graphs.) Furthermore, $\mathcal{H}$ is \textit{edge-transitive} if the automorphism group $\Aut(\mathcal{H})$  of $\mathcal{H}$ acts transitively on the edges of $\mathcal{H}$; observe that every edge-transitive hypergraph is uniform.

Recently Laszlo Babai and Peter Cameron~\cite[Corollary~$1.2$]{BC} have shown that, for sufficiently large  sets $\Omega$, every finite primitive group $G$ on $\Omega$ with $G\neq\Alt(\Omega)$  is the automorphism group of an edge-transitive hypergraph, that is, $G=\Aut(\mathcal{H})$ for some edge-transitive hypergraph $\mathcal{H}=(\Omega,\mathcal{E})$. Considering that not every transitive group  is the automorphism group of a graph and that rarely a primitive group is the automorphism group of a graph, in our opinion this result comes  with considerable surprise.  Observe that if $\Alt(\Omega)\leq \Aut(\mathcal{H})$, then $\Sym(\Omega)=\Aut(\mathcal{H})$ and hence  $\Alt(\Omega)$ is not the automorphism group of any hypergraph (let alone edge-transitive).

In this paper we refine~\cite[Corollary~$1.2$]{BC} and we obtain the explicit list of finite primitive groups which are not automorphism groups of edge-transitive hypergraphs.
\begin{theorem}\label{thrm:main}
Let $G$ be a finite primitive group on $\Omega$ with $G\neq \Alt(\Omega)$. Then either there exists an edge-transitive hypergraph $\mathcal{H}=(\Omega,\mathcal{E})$ with $G=\Aut(\mathcal{H})$, or $G$ is one of the groups in Table~$\ref{table1}$.
\end{theorem}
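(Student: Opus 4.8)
The plan is to recast the property ``$G=\Aut(\mathcal{H})$ for some edge-transitive hypergraph'' as a statement about the orbits of $G$ on subsets of $\Omega$ and the overgroups of $G$ in $\Sym(\Omega)$. First I would observe that if $\Aut(\mathcal{H})=G$ and $\mathcal{H}$ is edge-transitive, then $G$ is transitive on $\mathcal{E}$, so $\mathcal{E}$ is a single $G$-orbit on $r$-subsets (which is automatically $r$-uniform); conversely any single $G$-orbit $\mathcal{E}=\Delta^{G}$ is the edge set of an edge-transitive hypergraph with $G\le\Aut(\mathcal{H})$. Writing $\Aut(\mathcal{E})$ for the setwise stabiliser of $\mathcal{E}$ in $\Sym(\Omega)$, we have $\Aut(\mathcal{H})=G$ exactly when $\Aut(\mathcal{E})=G$. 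Since $\Aut(\mathcal{E})$ is a group containing $G$, and every proper overgroup of $G$ contains a minimal one, this yields the clean dichotomy I would prove next: \emph{$G$ is the automorphism group of some edge-transitive hypergraph if and only if some $G$-orbit on subsets of $\Omega$ is invariant under no minimal overgroup of $G$ in $\Sym(\Omega)$.} Here one uses that a single $G$-orbit $\mathcal{E}$ is $M$-invariant (for $G\le M$) if and only if $\mathcal{E}$ is itself a single $M$-orbit. Thus $G$ is an exception precisely when every $G$-orbit on subsets is ``swallowed'' by one of the finitely many minimal overgroups $M_{1},\dots,M_{t}$.

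With the reformulation in hand I would count. If $G$ is an exception then $P\le\sum_{i}P_{i}$, where $P$ and $P_{i}$ are the numbers of orbits of $G$ and of $M_{i}$ on the power set of $\Omega$, because distinct $M_{i}$-invariant $G$-orbits give distinct $M_{i}$-orbits. Now $P\ge 2^{n}/|G|$ with $n=|\Omega|$, while each $P_{i}$ is at most the number of $M_{i}$-orbits on subsets: this is only $O(n)$ when $M_{i}$ is $\Alt(\Omega)$ or $\Sym(\Omega)$ (orbits are then determined by size), and is comparable to $2^{n}/|M_{i}|$ otherwise. Hence, after the contribution of any $\Alt(\Omega)$- or $\Sym(\Omega)$-overgroup is absorbed, the controlling quantity is
\[
\sum_{i}\frac{1}{[M_{i}:G]},
\]
and a good orbit exists, so $G$ is not an exception, whenever this sum is less than $1$. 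To make this effective I would invoke the O'Nan--Scott theorem together with the classification of inclusions among finite primitive groups, which bounds the number $t$ of minimal overgroups and forces the indices $[M_{i}:G]$ of the intermediate primitive overgroups to grow with $n$. This recovers the Babai--Cameron conclusion \cite{BC} for all but a bounded collection of degrees and pins the possible exceptions down to a finite, explicitly computable set.

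The heart of the matter is to decide this finite set, and the inequality above fails exactly in the delicate situations where $G$ has a minimal overgroup of very small index --- most acutely an overgroup $M$ with $[M:G]=2$ --- or several small-index overgroups whose reciprocal indices already sum to at least $1$. I would first dispose of two clean sub-families. If $G$ is \emph{orbit-equivalent} to a proper overgroup $H$, i.e.\ $G$ and $H$ have identical orbits on subsets of every size, then every $G$-orbit is $H$-invariant and $G$ is a genuine exception; here I would appeal to the classification of orbit-equivalent pairs of primitive groups. Likewise, if $G$ is set-transitive (transitive on $r$-subsets for every $r$) but $G\neq\Alt(\Omega),\Sym(\Omega)$, then its only edge-transitive hypergraphs are the complete ones, whose automorphism group is $\Sym(\Omega)$, so $G$ is again an exception; here the Beaumont--Peterson list of set-transitive groups applies directly.

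The main obstacle is the remaining tight case: a primitive $G$ sitting with index $2$ (or with a few small-index overgroups) inside larger primitive groups, but \emph{not} orbit-equivalent to any of them. Then roughly half of the $G$-orbits on subsets are invariant under the index-$2$ overgroup $M$ and half are not, and the difficulty is that a good $G$-orbit must simultaneously escape \emph{every} minimal overgroup at once, which the crude count no longer guarantees. For these groups I would argue directly, using the precise $\Sym(\Omega)$-overgroup lattice of $G$ furnished by O'Nan--Scott together with Seress's results on the orbits of primitive groups on subsets, in order either to exhibit an explicit $G$-orbit with no extra symmetry or to prove that none exists. The groups for which no good orbit exists are exactly those collected in Table~\ref{table1}, and confirming that this list is complete --- handling the small-degree and index-$2$ cases one primitive family at a time --- is where the real work of the proof lies.
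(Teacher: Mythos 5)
Your skeleton is essentially the paper's engine in different clothing: the paper's Lemma~\ref{basic-2} plays the role of your dichotomy, except that it works with the \emph{maximal} members $\mathcal{M}(G)$ of the overgroup lattice and settles for a sufficient criterion (find $\Delta\subseteq\Omega$ with $M_\Delta=1$ for every $M\in\mathcal{M}(G)$, i.e.\ $\Delta\notin\mathcal{S}(G)$) rather than your ``iff''; and your Burnside count corresponds to the paper's bound $|\mathcal{F}(M)|\le\sum_{C\in\mathcal{C}(M)}2^{\orb_\Omega(C)}$ of Lemma~\ref{fact3}. (Your treatment of the $\Alt(\Omega)$/$\Sym(\Omega)$ overgroups by counting levels is a tidy alternative to the paper's appeal to Kantor~\cite{Kantor} inside Lemma~\ref{basic-2}.) But the proposal has genuine gaps precisely where the theorem lives. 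The assertion that each $P_i$ is ``comparable to $2^{n}/|M_i|$'' is not a routine step: by Burnside, $P_i=\frac{1}{|M_i|}\sum_{g\in M_i}2^{\orb_\Omega(g)}$, and since $\orb_\Omega(g)\le (n+\fix_\Omega(g))/2$, the non-identity terms are controlled only via minimal-degree estimates. For the genuinely dangerous maximal overgroups --- $\Sym(m)$ on $2$-subsets, $\Sym(m)\wr\Sym(\ell)$ in product action, $\AGL_d(p)$ --- single elements fix $n-O(\sqrt{n})$ points and contribute $2^{\,n-O(\sqrt{n})}$ each, and taming these sums is exactly the content of the paper's Lemmas~\ref{basic2} and~\ref{basic22} together with the bounds of Guralnick--Magaard~\cite{GM} and Liebeck--Saxl~\cite{LieSax}. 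Relatedly, ``the indices $[M_i:G]$ grow with $n$'' is neither the operative mechanism (the paper's criterion $|\mathcal{S}(G)|<2^{n}$ never involves $[M:G]$) nor a correct diagnosis of where the counting fails: an index-$2$ overgroup already contributes $1/2$ to your sum even in completely harmless situations, while some of the hardest cases have overgroups of enormous index, e.g.\ $\Sz(q)\le\Sym(q^2+1)$ acting on $2$-subsets (case~(4) of the proof of Theorem~\ref{AS}). Finally, you give no tool for enumerating or bounding the \emph{minimal} overgroups: the classification results you invoke (Aschbacher~\cite{Asch1,Asch}, Liebeck--Praeger--Saxl~\cite{LPS1}, Praeger~\cite{Pra}) describe maximal members of $\mathcal{O}(G)$, and the paper's bounds on $|\mathcal{M}(G)|$ come from bespoke counts of imprimitivity systems and Cartesian decompositions (Lemmas~\ref{basic0}, \ref{CDlemma}, \ref{CartProduc}) plus pigeonhole arguments in the HA and PA cases; switching your criterion to maximal overgroups repairs this, at the cost of the ``iff'' --- which is exactly the trade the paper makes.

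The endgame is also misdirected. Orbit-equivalence of $G$ with a single overgroup is strictly stronger than being an exception (an exception only needs every $G$-orbit swallowed by \emph{some}, possibly varying, overgroup), so the classification of orbit-equivalent primitive pairs cannot by itself complete the determination of Table~\ref{table1}, and the paper never uses it. What the paper actually does in the tight case is this: when $|\mathcal{M}(G)|=1$ with $\mathcal{M}(G)=\{M\}$, the question reduces exactly to whether $M$ has a regular orbit on $2^\Omega$, which is decided by Seress's theorem~\cite{Seress} (Lemma~\ref{basic}); all remaining configurations survive only in explicitly bounded degree, where the list is settled by machine computation. Your closing plan --- ``argue directly, handling the small-degree and index-$2$ cases one primitive family at a time'' --- defers precisely the quantitative estimates and computations that constitute the proof.
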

\begin{table}[!h]
\begin{tabular}{|l|l|}\hline
Deg. $5$&$C_5$, $\AGL_1(5)$\\
Deg. $6$&$\PGL_2(5)$\\
Deg. $7$&$C_7$, $C_7\rtimes C_3$\\
Deg. $8$&$\AGL_1(8)$, $\mathrm{A}\Gamma\mathrm{L}_1(8)$,  $\PSL_2(7)$\\
Deg. $9$&$(C_3\times C_3)\rtimes C_4$, $\AGL_1(9)$, $(C_3\times C_3)\rtimes Q_8$, $\ASL_2(3)$, $\PSL_2(8)$, $\mathrm{P}\Gamma\mathrm{L}_2(8)$\\
Deg. $10$&$\PSL_2(9)$, $\PGL_2(9)$\\\hline
\end{tabular}
\caption{Primitive groups that are not automorphism groups of edge-transitive hypergraphs}\label{table1}
\end{table}
Yet again, we find rather surprising that the list of exceptions is so short. The groups $\AGL_1(5)$, $\PGL_2(5)$, $\PSL_2(8)$ and $\mathrm{P}\Gamma\mathrm{L}_2(8)$ are set-transitive and hence are genuine exceptions in Theorem~\ref{thrm:main}: like the alternating group $\Alt(\Omega)$, a proper set-transitive subgroup of $\Sym(\Omega)$ cannot be the automorphism group of any family of subsets of $\Omega$. It is easy to check (for example with the computer algebra system \texttt{magma}~\cite{magma}) that all the groups in Table~$1$ are genuine exceptions. 

There is a tight analogue between~\cite[Corollary~$1.2$]{BC} and Theorem~\ref{thrm:main} and two well-known results in the literature. In fact, Cameron, Neumann and Saxl~\cite{CNS} have shown that, apart the alternating and the symmetric group,  every finite primitive group on $\Omega$, with $|\Omega|$ sufficiently large,  has a regular orbit on the set of subsets of $\Omega$. Later,~Seress~\cite{Seress} has computed the explicit list of exceptions.  Here the analogy between~\cite{CNS} and~\cite{BC} and between~\cite{Seress} and Thereom~\ref{thrm:main}  is not purely aesthetic: some probabilistic arguments in~\cite{BC,CNS} have a very similar  flavour, and we use both the main result and some key ideas in~\cite{Seress} to prove Theorem~\ref{thrm:main}.

We observe that (with different terminology)~\cite[Theorem~$4.2$]{DVS} shows that, apart the alternating groups and ten explicit exceptions, every finite primitive group  on $\Omega$ is the automorphism group of a hypergraph $\mathcal{H}=(\Omega,\mathcal{E})$. The hypergraphs considered in~\cite{DVS} are rather far from being uniform (let alone being edge-transitive) and hence Theorem~\ref{thrm:main} improves~\cite[Theorem~$4.2$]{DVS}.

Our proof of Theorem~\ref{thrm:main} requires a detailed knowledge of the structure of the finite primitive groups and in particular we use the O'Nan-Scott theorem combined with the Classification of the Finite Simple Groups.

\subsection*{Acknowledgements}I am in debt with Peter Cameron for suggesting this problem and with  the organisers of the conference: ``New trends in algebraic combinatorics'', in Villanova, in June 2014. This wonderful environment was extremely fruitful and gave me the opportunity  to discover and  discuss~\cite{BC} with Peter. 

I am also in debt with Primo\v{z} Poto\v{c}nik for hosting the heavy computer computations required in the proof of Theorem~\ref{thrm:main}.

\subsection{Computer computations}
All the computations in this paper are done with the computer algebra system \texttt{magma}~\cite{magma}. These computations require a considerable amount of patience but can be performed with standard built-in \texttt{magma} functions. 

Given a primitive group $G$ on $\Omega$, we use a ``random'' approach to exhibit an edge-transitive  hypergraph $\mathcal{H}=(\Omega,\mathcal{E})$ with $G=\Aut(\mathcal{H})$, that is, we generate a random subset $\Delta$ of $\Omega$ of small cardinality ($|\Delta|\leq 6$) and we check whether $G=\Aut(\Omega,\mathcal{E})$ with $\mathcal{E}=\Delta^G=\{\Delta^g\mid g\in G\}$. Except for the groups in Table~$1$ (which are not automorphism groups of edge-transitive hypergraphs), typically with this method we succeed with no more than three trials. In particular, this suggests that, for most primitive groups, the proportion of subsets $\Delta$ of $\Omega$ with $G=\Aut(\Omega,\Delta^G)$ is very large. 

Only a handful of cases required a thorough  analysis. For instance, $\PSL_3(4)$ in its action on the $21$ points of the projective plane of order four is the automorphism group of an edge-transitive $10$-uniform hypergraph,  but is not the automorphism group of any edge-transitive $r$-uniform hypergraph for $r\notin\{ 10,11\}$.

\subsection{Notation}\label{notation}
Let $\Omega$ be a finite set and let $g$ be a permutation on $\Omega$. We denote by $\Fix_\Omega(g)$ the set $\{\omega\in \Omega\mid \omega^g=\omega\}$, by $\fix_\Omega(g)$ the cardinality $|\Fix_\Omega(g)|$ and  by $\orb_\Omega(g)$  the number of cycles of $g$   (in its decomposition in disjoint cycles). Similarly, if $C$ is the cyclic group generated by $g$, we write $\Fix_\Omega(C)=\Fix_\Omega(g)$, $\fix_\Omega(C)=\fix_\Omega(g)$ and $\orb_\Omega(C)=\orb_\Omega(g)$.

Given a group $G$, we denote by $\mathcal{C}(G)$ the set of subgroups of prime order of $G$.

Let $G$ be a primitive group on $\Omega$ with $\Alt(\Omega)\nleq G$. One of the main ingredients in the proof of Theorem~\ref{thrm:main} is the structure of the lattice  of  overgroups of $G$: here we will be using the results obtained by Aschbacher in~\cite{Asch1,Asch} and  by Liebeck, Praeger and Saxl in~\cite{LPS1,Pra}. Following the notation in~\cite{Asch1,Asch}, we denote by $\mathcal{O}(G)$ the lattice $\{M\leq \Sym(\Omega)\mid G\leq M\}$ (ordered by set inclusion) and by $\mathcal{M}(G)$ the maximal elements of $\mathcal{O}(G)\setminus\{\Alt(\Omega),\Sym(\Omega)\}$. 

With a slight abuse of terminology and following~\cite{BC},  we say that a subgroup $M$ of $\Sym(\Omega)$ is \textit{maximal} if $M\notin\{\Alt(\Omega) ,\Sym(\Omega)\}$ and either $M$ is a maximal subgroup of $\Sym(\Omega)$, or  $\Alt(\Omega)$ is the only proper subgroup of $\Sym(\Omega)$ containing $M$. In particular, the elements of $\mathcal{M}(G)$ are exactly the maximal subgroups containing $G$. From~\cite{LPS1}, we see that every maximal subgroup has O'Nan-Scott type HA (``holomorphic abelian''), AS (``almost simple''), PA (``product action'')  or SD (``simple diagonal''). Thus, according to this subdivision, we partition the elements of $\mathcal{M}(G)$ in four pair-wise disjoint sets $\mathcal{M}_{\textrm{HA}}(G)$, $\mathcal{M}_{\textrm{AS}}(G)$, $\mathcal{M}_{\textrm{PA}}(G)$ and $\mathcal{M}_{\textrm{SD}}(G)$.

We denote by $\soc(G)$ the socle of $G$. Given a subset $\Delta$ of $\Omega$, we denote by $G_\Delta$ the set-wise stabiliser $\{g\in G\mid \Delta^g=\Delta\}$. We denote by $2^\Omega$ the power-set of $\Omega$, that is, the set of subsets of $\Omega$. Moreover, we define 
\begin{align}\label{alin}
\mathcal{F}(M)&=\{\Delta\in 2^\Omega\mid \Delta^g=\Delta \textrm{ for some }g\in M\setminus\{1\}\}, \textrm{ and}\\\nonumber
\mathcal{S}(G)&=\bigcup_{M\in\mathcal{M}(G)}\mathcal{F}(M).
\end{align}
Thus $\mathcal{F}(M)$ consists of the subsets of $\Omega$ fixed by some non-identity element of $M$ and similarly $\mathcal{S}(G)$ consists of the subsets of $\Omega$ fixed by some non-identity element of some maximal subgroup containing $G$.

In this paper, we use the subdivision of the finite primitive groups in eight types as suggested by Laszlo Kov\'{a}cs, and then formulated by Praeger, Liebeck and Saxl (see~\cite{LPS2}, or~\cite[Section~$3$]{Pra} which has a formulation closer to our application).


\section{Basic lemmas}\label{basiclemma}
We start with two elementary observations, which are the backbone  underlying the idea in the proof of Theorem~\ref{thrm:main}.

\begin{lemma}\label{basic-2}
Let $G$ be a finite primitive group on $\Omega$ with $\Alt(\Omega)\nleq G$. If $\mathcal{S}(G)\subsetneq 2^\Omega$, then  there exists an edge-transitive hypergraph $\mathcal{H}=(\Omega,\mathcal{E})$ with $G=\Aut(\mathcal{H})$.
\end{lemma}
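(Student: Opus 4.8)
The plan is to exhibit the required hypergraph explicitly as a single $G$-orbit of subsets. I would choose $\Delta\in 2^\Omega\setminus\mathcal{S}(G)$, which is possible by hypothesis, and set $\mathcal{E}=\Delta^G$ and $\mathcal{H}=(\Omega,\mathcal{E})$. Since $\mathcal{E}$ is a $G$-orbit, $G$ acts transitively on $\mathcal{E}$ and $G\leq \Aut(\mathcal{H})$; in particular $\mathcal{H}$ is edge-transitive. The whole content is therefore the equality $\Aut(\mathcal{H})=G$, which I would prove by contradiction, assuming $G\subsetneq\Aut(\mathcal{H})$.

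First I would record two preliminary facts. Since $G\neq \Alt(\Omega),\Sym(\Omega)$, the finite poset $\mathcal{O}(G)\setminus\{\Alt(\Omega),\Sym(\Omega)\}$ is non-empty, so $\mathcal{M}(G)\neq\emptyset$; fixing any $M\in\mathcal{M}(G)$ and using $G\leq M$ gives $G_\Delta\leq M_\Delta$, while $\Delta\notin\mathcal{F}(M)$ forces $M_\Delta=1$, and hence $G_\Delta=1$. Second, writing $A=\Aut(\mathcal{H})$, the group $A$ is primitive (it contains the primitive group $G$) and it permutes $\mathcal{E}=\Delta^G$; since $G$ is already transitive on $\mathcal{E}$, a routine orbit--stabiliser argument gives $A=GA_\Delta$. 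Consequently, if $A\supsetneq G$ then $A_\Delta\supsetneq G_\Delta$, so $A_\Delta$ contains a non-identity element $a$ (which fixes $\Delta$ setwise).

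Now I would split according to $A$. If $A\neq\Sym(\Omega)$, then (as $\Alt(\Omega)\leq A$ would force $A=\Sym(\Omega)$ by the remark in the introduction) we have $A\in\mathcal{O}(G)\setminus\{\Alt(\Omega),\Sym(\Omega)\}$, so $A$ lies below some maximal element $M\in\mathcal{M}(G)$. Then $a\in M\setminus\{1\}$ fixes $\Delta$, whence $\Delta\in\mathcal{F}(M)\subseteq\mathcal{S}(G)$, contradicting the choice of $\Delta$. This disposes of every case except $A=\Sym(\Omega)$.

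The case $A=\Sym(\Omega)$ is the main obstacle. Here $\mathcal{E}=\Delta^G$ is $\Sym(\Omega)$-invariant and $r$-uniform with $r=|\Delta|$, so it consists of all $r$-subsets of $\Omega$; thus $G$ is transitive on the $\binom{|\Omega|}{r}$ subsets of size $r$, and since $G_\Delta=1$ we get $|G|=\binom{|\Omega|}{r}$, i.e.\ $G$ is sharply $r$-homogeneous. If some $M\in\mathcal{M}(G)$ properly contained $G$, then $M$ would still be $r$-homogeneous with $|M|>|G|=\binom{|\Omega|}{r}$, forcing $M_\Delta\neq1$ and again $\Delta\in\mathcal{S}(G)$; so this case can survive only when $\mathcal{M}(G)=\{G\}$, that is, when $G$ itself is maximal. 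To close the argument I would invoke the classification of multiply homogeneous groups (Livingstone--Wagner and Kantor): a sharply $r$-homogeneous group with $2\leq r\leq|\Omega|-2$ is never $r$-transitive (since $r$-transitivity would force $r!\leq1$) and is always properly contained in a larger $r$-homogeneous or $r$-transitive group distinct from $\Alt(\Omega)$ and $\Sym(\Omega)$, hence is not maximal in $\Sym(\Omega)$; while the boundary values $r\in\{1,|\Omega|-1\}$ would make $G$ regular of prime order, which is likewise non-maximal. Either way $\mathcal{M}(G)\neq\{G\}$, the final contradiction. I expect the verification that no sharply $r$-homogeneous maximal subgroup of $\Sym(\Omega)$ exists to be the delicate point, and the only place where genuine classification input is required.
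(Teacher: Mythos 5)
Your proposal is correct and takes essentially the same route as the paper's proof: pick $\Delta\notin\mathcal{S}(G)$, form the orbit hypergraph $\mathcal{E}=\Delta^G$, dispose of any automorphism group $A$ with $\Alt(\Omega)\nleq A$ via a maximal overgroup $M\in\mathcal{M}(G)$ together with $\Delta\notin\mathcal{F}(M)$, and rule out $A=\Sym(\Omega)$ by showing $G$ would be a sharply $m$-homogeneous maximal subgroup of $\Sym(\Omega)$, which Kantor's classification forbids. The only step you defer -- the inspection of Kantor's list (after normalising $|\Delta|\leq|\Omega|/2$ by complementation, as the paper does at the outset) -- goes through exactly as you predict: the sharply homogeneous cases are $\ASL_1(q)<\AGL_1(q)$, $\AGL_1(8)<\mathrm{A}\Gamma\mathrm{L}_1(8)$ and $\mathrm{A}\Gamma\mathrm{L}_1(32)<\AGL_5(2)$, none maximal, while the remaining groups on the list fail regularity on $m$-subsets.
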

\begin{proof}
Let $\Delta$ be an element of $2^\Omega$ with $\Delta\notin \mathcal{S}(G)$. Replacing $\Delta$ by $\Omega\setminus \Delta$ if necessary, we may assume that $1\leq |\Delta|\leq |\Omega|/2$. Set $\mathcal{E}=\Delta^G=\{\Delta^g\mid g\in G\}$, $\mathcal{H}=(\Omega,\mathcal{E})$ and $A=\Aut(\mathcal{H})$. By construction $\mathcal{H}$ is an edge-transitive hypergraph and $G\leq A$. Write $n=|\Omega|$ and $m=|\Delta|$.

Suppose that $\Alt(\Omega)\nleq A$. Then $A\in\mathcal{O}(G)\setminus\{\Alt(\Omega),\Sym(\Omega)\}$ and hence there exists $M\in\mathcal{M}(G)$ with $G\leq A\leq M$. Since $\Delta\notin\mathcal{F}(M)$, we get $M_\Delta=1$ and hence $A_\Delta=1$. It follows that
$$|A|=|A:A_\Delta|=|\Delta^A|=|\mathcal{E}|=|\Delta^G|=|G:G_\Delta|=|G|$$
and thus $A=G$.

Suppose that $\Alt(\Omega)\leq A$. (We show that this case cannot occur.) Then $A=\Sym(\Omega)$, $\mathcal{E}=\{\Lambda\in 2^\Omega\mid |\Lambda|=m\}$ and $G$ is transitive on the $m$-subsets of $\Omega$. As $\Delta\notin\mathcal{S}(G)$, for every $M\in \mathcal{M}(G)$, we get $G_\Delta=M_\Delta=1$ and hence $|G|={n\choose m}=|M|$. It follows that $\mathcal{M}(G)=\{G\}$ (that is, $G$ is a maximal subgroup of $\Sym(\Omega)$) and $G$ acts regularly on the subsets of $\Omega$ of cardinality $m$. Assume that $m=1$. Then $G$ acts regularly on $\Omega$ and, by primitivity, has prime order. This contradicts the fact that $G$ is maximal. Thus $m\geq 2$. Therefore $|G|={n\choose m}<n!/(n-m)!=n(n-1)\cdots (n-m+1)$ and hence $G$ is not $m$-transitive. The main result of~\cite{Kantor} (see also~\cite[Theorem~$9.4$B]{DM} for the notation) gives that one of the following happens:
\begin{description}
\item[(i)]$m=2$, $\ASL_1(q)\leq G\leq \mathrm{A}\Sigma\mathrm{L}_1(q)$ with $q\equiv 3\mod 4$;
\item[(ii)]$m=3$, $\PSL_2(q)\leq G\leq \mathrm{P}\Sigma\mathrm{L}_2(q)$ with $q\equiv 3\mod 4$;
\item[(iii)]$m=3$,  $G\in \{\AGL_1(8),\mathrm{A}\Gamma\mathrm{L}_1(8),\mathrm{A}\Gamma\mathrm{L}_1(32)\}$;
\item[(iv)]$m=4$, $G\in \{\PGL_2(8),\mathrm{P}\Gamma\mathrm{L}_2(8),\mathrm{P}\Gamma\mathrm{L}_2(32)\}$.
\end{description}
A quick inspection  reveals that the groups in this list are either not maximal or do not act regularly on the $m$-subsets of $\Omega$. This final contradiction concludes the proof.
\end{proof}

\begin{lemma}\label{basic}
Let $G$ be a finite primitive group on $\Omega$ with $\Alt(\Omega)\nleq G$. Suppose that $|\mathcal{M}(G)|=1$. Then either there exists an edge-transitive hypergraph $\mathcal{H}=(\Omega,\mathcal{E})$ with $G=\Aut(\mathcal{H})$, or $G$ is one of the groups in Table~$\ref{table1}$.
\end{lemma}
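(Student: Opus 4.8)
The plan is to exploit the hypothesis $|\mathcal{M}(G)|=1$ to collapse the whole question onto a single group, the unique maximal overgroup, and then to invoke the regular-orbit theorem of Seress~\cite{Seress}. Write $\mathcal{M}(G)=\{M\}$. By the definitions in~\eqref{alin} we then have $\mathcal{S}(G)=\mathcal{F}(M)$, so that $\mathcal{S}(G)=2^\Omega$ if and only if $\mathcal{F}(M)=2^\Omega$. The first observation I would make explicit is that $\mathcal{F}(M)=2^\Omega$ is equivalent to $M$ having \emph{no} regular orbit on $2^\Omega$: indeed $\Delta\in\mathcal{F}(M)$ means precisely that the set-wise stabiliser $M_\Delta$ is non-trivial, so $\mathcal{F}(M)\subsetneq 2^\Omega$ holds exactly when some $\Delta$ satisfies $M_\Delta=1$, that is, when $M$ fixes a subset regularly.

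With this equivalence in hand the argument splits into two cases. I would first note that $M$ is primitive, being an overgroup of the primitive group $G$, and that $\Alt(\Omega)\nleq M$, since $M$ is maximal in the sense of Section~\ref{notation}; hence Seress's theorem applies to $M$. If $M$ has a regular orbit on $2^\Omega$, then $\mathcal{S}(G)=\mathcal{F}(M)\subsetneq 2^\Omega$, and Lemma~\ref{basic-2} at once produces an edge-transitive hypergraph $\mathcal{H}=(\Omega,\mathcal{E})$ with $G=\Aut(\mathcal{H})$. Thus the only groups that require further work are those whose unique maximal overgroup $M$ has no regular orbit on subsets, and by~\cite{Seress} these $M$ form an explicit finite list of primitive groups of small degree.

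For the remaining case I would then fix, one at a time, each exceptional $M$ from Seress's list and enumerate all primitive groups $G$ with $\mathcal{M}(G)=\{M\}$, that is, all primitive $G\leq M$ admitting $M$ as their unique maximal overgroup. Since these finitely many groups have small degree, for each of them the question ``does there exist $\Delta$ with $G=\Aut(\Omega,\Delta^G)$?'' is a finite computation, carried out with \texttt{magma}~\cite{magma} by testing representatives $\Delta$ of the $G$-orbits on $2^\Omega$: whenever such a $\Delta$ exists the associated edge-transitive hypergraph realises $G$, and whenever none exists $G$ is a genuine exception. I would record separately the clean structural reason excluding the set-transitive groups $\AGL_1(5)$, $\PGL_2(5)$, $\PSL_2(8)$ and $\PGammaL_2(8)$, namely that a proper set-transitive subgroup of $\Sym(\Omega)$ stabilises no non-trivial family of subsets and so cannot be the automorphism group of any hypergraph. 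The outcome of this case analysis is precisely the groups of Table~\ref{table1}.

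The main obstacle is this last step. Although the reduction via Seress is clean and the list of exceptional $M$ is finite, one must correctly identify \emph{all} primitive $G$ lying below each such $M$ with $M$ as unique maximal overgroup, and then settle the hypergraph question for each. The delicate point is that once $\mathcal{S}(G)=2^\Omega$ the sufficient condition of Lemma~\ref{basic-2} is no longer available, so non-realisability cannot be read off from $\mathcal{S}(G)$ alone and must be verified directly; conversely, realisability may still hold for many of these groups and has to be exhibited by an explicit choice of $\Delta$. Keeping this finite but intricate bookkeeping correct, and matching it exactly against Table~\ref{table1}, is where the real effort lies.
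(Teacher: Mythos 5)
Your argument is essentially the paper's own proof: both exploit $\mathcal{S}(G)=\mathcal{F}(M)$ to conclude via Lemma~\ref{basic-2} whenever $M$ has a regular orbit on $2^\Omega$, and otherwise identify $M$ among the forty-three exceptional groups of~\cite[Theorem~$2$]{Seress} (degrees $5$--$17$, $21$--$24$, $32$) and finish with a case-by-case \texttt{magma} check of the small primitive groups involved. Your additional remarks --- that Seress's theorem applies because $M$ is primitive with $\Alt(\Omega)\nleq M$, and that once $\mathcal{S}(G)=2^\Omega$ non-realisability must be verified directly rather than via Lemma~\ref{basic-2} --- are correct elaborations of points the paper leaves implicit.
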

\begin{proof}
Let $M$ be the maximal subgroup with $\mathcal{M}(G)=\{M\}$. Then $\mathcal{S}(G)=\mathcal{F}(M)$. If $\mathcal{F}(M)\subsetneq 2^\Omega$, then the proof follows from Lemma~\ref{basic-2}. Suppose that $\mathcal{F}(M)=2^\Omega$, that is, $M$ has no regular orbit on the set of subsets of $\Omega$. Then $M$ is one of the forty-three groups given in~\cite[Theorem~$2$]{Seress} and in particular $5\leq |\Omega|\leq 17$, or $21\leq |\Omega|\leq 24$, or $|\Omega|=32$. Now the proof follows with a case-by-case analysis  using the library of small  primitive groups in the computer algebra system \texttt{magma}.
\end{proof}


\section{Primitive groups of HS and SD type}\label{SD}
In this section we prove Theorem~\ref{thrm:main} when $G$ is a primitive group of HS or SD type. This is by far the easiest case to deal with.
\begin{theorem}\label{typeHS-SD}
Let $G$ be a finite primitive group on $\Omega$ of HS or SD type. Then there exists an edge-transitive hypergraph $\mathcal{H}=(\Omega,\mathcal{E})$ with $G=\Aut(\mathcal{H})$.
\end{theorem}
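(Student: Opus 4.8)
The plan is to verify the hypothesis $\mathcal{S}(G)\subsetneq 2^\Omega$ of Lemma~\ref{basic-2}, which immediately produces the required edge-transitive hypergraph. The starting observation is the crude but decisive fact that a group of HS or SD type has degree $|\Omega|=|T|^{k-1}$ for some non-abelian simple group $T$ and some $k\geq 2$; since $|T|\geq 60$, we always have $n:=|\Omega|\geq 60$. In particular no group treated here can occur in Table~\ref{table1} (all of whose degrees are at most $10$), so the only task is to exhibit the hypergraph. When $|\mathcal{M}(G)|=1$ this is essentially free: the unique maximal overgroup has degree $n>32$, hence is none of the forty-three groups of Seress, so $\mathcal{F}(M)\subsetneq 2^\Omega$ and Lemma~\ref{basic} applies directly. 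I would handle the remaining case $|\mathcal{M}(G)|\geq 2$ by a uniform counting estimate, which in fact covers $|\mathcal{M}(G)|=1$ as well.

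For the counting I use that every subset fixed by a non-identity element of $M$ is fixed by a subgroup of prime order, so
\[
|\mathcal{S}(G)|\leq\sum_{M\in\mathcal{M}(G)}\sum_{C\in\mathcal{C}(M)}2^{\orb_\Omega(C)} .
\]
A subgroup $C$ of prime order $p$ satisfies $\orb_\Omega(C)=\fix_\Omega(C)+(n-\fix_\Omega(C))/p$, which is maximised at $p=2$; hence the two quantities I must control are the total number of prime-order subgroups, $\sum_M|\mathcal{C}(M)|$, and the fixed-point numbers $\fix_\Omega(C)$.

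The structural input comes from~\cite{LPS1}: every $M\in\mathcal{M}(G)$ has type HA, AS, PA or SD, and the containment $\soc(G)=T^k\leq M$ forces each $M$ to be either the full diagonal group $W=T^k.(\Out(T)\times\Sym(k))$ or (only when $k\geq 3$) a product-action group $H\wr\Sym(\ell)$ of compound-diagonal shape with $H$ diagonal of degree $n^{1/\ell}$; in every case $|M|\leq n^{O(1)}$ and there are boundedly many, so $\sum_M|M|\leq n^{O(1)}$. For the fixed points I pass to the coset model $\Omega=T^k/D$ with $D$ the diagonal: a non-identity element of the socle either moves every coset or is conjugate to a diagonal element $(t,\dots,t)$, for which a direct computation gives $\fix_\Omega=|C_T(t)|^{k-1}$. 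Since a non-abelian simple group has no proper subgroup of index less than $5$, the class of any non-identity $t$ has size at least $5$, whence $|C_T(t)|\leq|T|/5$ and $\fix_\Omega(C)\leq(|T|/5)^{k-1}\leq n/5$ for socle elements. The elements mixing in $\Out(T)$ or a non-trivial permutation of the $k$ factors, and the prime-order elements of the product-action overgroups—where a permutation of $\ell$ coordinates fixes at most $n^{1-1/\ell}\leq n/5$ points—obey the same bound $\fix_\Omega(C)\leq n/5$.

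Feeding $\fix_\Omega(C)\leq n/5$ into $\orb_\Omega(C)\leq\fix_\Omega(C)+(n-\fix_\Omega(C))/2$ gives $\orb_\Omega(C)\leq 3n/5$, so
\[
|\mathcal{S}(G)|\leq\Bigl(\textstyle\sum_{M}|M|\Bigr)\,2^{3n/5}\leq n^{O(1)}\,2^{3n/5},
\]
which is strictly less than $2^{n}$ as soon as $n\geq 60$; hence $\mathcal{S}(G)\subsetneq 2^\Omega$ and Lemma~\ref{basic-2} supplies the hypergraph. The step I expect to be the real obstacle is making the uniform bound $\fix_\Omega(C)\leq n/5$ rigorous for the elements \emph{outside} the socle: those mixing in $\Out(T)$ or permuting the simple factors, and the prime-order elements of the compound-diagonal product-action overgroups, where the interaction of the wreath and twisting structure with the diagonal embedding must be unwound case by case. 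I would also keep the smallest socles $T=\Alt(5),\Alt(6)$ (degrees $n=60,360$) in reserve, since there the estimates are tightest and a direct verification by machine is cleaner than the general argument.
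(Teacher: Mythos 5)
Your first branch is, in substance, the paper's entire proof: for HS/SD type the degree is $|T|^{a-1}\geq 60$, so the unique maximal overgroup (when $|\mathcal{M}(G)|=1$) cannot be one of the forty-three groups of \cite{Seress} (all of degree at most $32$), whence $\mathcal{F}(M)\subsetneq 2^\Omega$ and Lemma~\ref{basic-2} applies; you even make explicit why no group of Table~\ref{table1} can arise, which the paper leaves implicit. The gap is in your second branch, $|\mathcal{M}(G)|\geq 2$. Your structural claim --- that $\soc(G)=T^k\leq M$ forces $M$ to be the diagonal normaliser $W=T^k.(\Out(T)\times\Sym(k))$ or a compound-diagonal product-action group --- is not something \cite{LPS1} gives you: that paper classifies the maximal subgroups of $\Sym(\Omega)$ by type, but deciding \emph{which} of them contain a given SD group is the inclusion problem, and you offer no argument ruling out AS- or HA-type overgroups. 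The relevant result is Proposition~$8.1$ of \cite{Pra}, which the paper invokes, and it is stronger than your dichotomy: every $M\in\mathcal{M}(G)$ satisfies $\soc(M)=\soc(G)$, so $M=\norm{\Sym(\Omega)}{\soc(G)}$ and $|\mathcal{M}(G)|=1$ \emph{always}. Thus your second branch is vacuous, but as written you neither know this nor have a proof of the weaker structure statement it rests on.

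Moreover, even granting the dichotomy, the uniform bound $\fix_\Omega(C)\leq n/5$ fails precisely at the step you flagged, for elements outside the socle. Take HS type with $T=\Alt(5)$: the factor-swapping involution of $W$ acts on $\Omega=T$ as $x\mapsto x^{-1}$ and fixes $1+i(T)=16$ points, while $n/5=12$. Your reservation of $T\in\{\Alt(5),\Alt(6)\}$ for machine checking happens to cover this instance, but the general assertion for the twisted coset elements (maps of shape $x\mapsto ax^{-1}b$, possibly composed with outer automorphisms, and the analogous $k\geq 3$ elements permuting factors) is simply asserted, and making it rigorous would require Frobenius-type counts of twisted involutions or a substitute such as the bound $\fix_\Omega(g)\leq 4n/7$ from \cite{GM} with your constants re-tuned. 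None of this machinery is needed: with the correct citation the whole counting apparatus collapses to the paper's two-line argument via Lemma~\ref{basic}.
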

\begin{proof}
Let $M\in \mathcal{M}(G)$. From~\cite[Proposition~$8.1$]{Pra}, we have $\soc(M)=\soc(G)$ and hence $M\leq \norm{\Sym(\Omega)}{\soc (G)}$. By the maximality of $M$, we have $M=\norm {\Sym(\Omega)}{\soc (G)}$. This shows that $\norm {\Sym(\Omega)}{\soc (G)}$ is the unique maximal subgroup of $\Sym(\Omega)$ containing $G$ and $|\mathcal{M}(G)|=1$. Now the proof follows from Lemma~\ref{basic}.
\end{proof}

Before dealing with other O'Nan-Scott types, we highlight the main ingredients in the proof of Theorem~\ref{typeHS-SD}. First, it is necessary to have a detailed knowledge of all  maximal overgroups of $G$. The work in~\cite{Asch1,Asch} and in~\cite{LPS1,Pra} deals with the inclusion problem among primitive groups and is fundamental for our application. Second, it is necessary to establish the existence of a subset $\Delta$ of $\Omega$ with $M_\Delta=1$, for every $M\in \mathcal{M}(G)$. Rarely we will be able  (as in the proof above) to simply invoke~\cite[Theorem~$2$]{Seress}. However, a probabilistic approach (which is also one of the fundamental tools in~\cite{BC,CNS,Seress}) will often reduce this second problem to the case that $G$ has small degree.


\section{Some more basic lemmas and some estimates}\label{estimates}
We following four facts can be hardly called lemmas, but they will prove useful.
\begin{lemma}\label{basic0}
Let $L$ be a transitive group of degree $\ell$. Then $L$ has at most $\ell^{\log_2(\ell)}$ systems of imprimitivity. 
\end{lemma}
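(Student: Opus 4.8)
The plan is to recast the count as a question about the subgroup lattice of $L$ and then to bound the latter by a short-generation argument. Fix a point $\alpha$. For a transitive group there is the familiar bijection between the systems of imprimitivity of $L$ and the subgroups $H$ with $L_\alpha\le H\le L$: to such an $H$ one attaches the block $\alpha^H$, and conversely the block of a given system that contains $\alpha$ recovers $H$ as its setwise stabiliser. Since $L$ is transitive of degree $\ell$ we have $|L:L_\alpha|=\ell$, so it suffices to show that the interval $[L_\alpha,L]$ in the subgroup lattice of $L$ contains at most $\ell^{\log_2(\ell)}$ subgroups.

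To bound the number of subgroups in $[L_\alpha,L]$, I would show that each such $H$ is generated over $L_\alpha$ by few elements. Build a chain $L_\alpha=H_0<H_1<\cdots<H_t=H$ where $H_i=\langle H_{i-1},x_i\rangle$ for some $x_i\in H\setminus H_{i-1}$. As $x_i\notin H_{i-1}$, the coset $H_{i-1}x_i$ is disjoint from $H_{i-1}$, so $|H_i|\ge 2|H_{i-1}|$; iterating gives $2^t\le|H:L_\alpha|\le\ell$ and hence $t\le\log_2(\ell)$. Fixing a right transversal $T$ of $L_\alpha$ in $L$ (of size $\ell$), I may take each $x_i$ inside $T$, since $L_\alpha\le H_{i-1}$ means that replacing $x_i$ by the representative of its right $L_\alpha$-coset changes neither $\langle H_{i-1},x_i\rangle$ nor the fact that $x_i\notin H_{i-1}$; moreover these representatives lie among the $\ell-1$ nontrivial cosets. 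Thus $H=\langle L_\alpha,S\rangle$ for some subset $S$ of the nontrivial coset representatives with $|S|\le\log_2(\ell)$.

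Choosing, for each $H$, one such $S=S_H$, the assignment $H\mapsto S_H$ is injective, because $H$ is recovered as $\langle L_\alpha,S_H\rangle$. Hence the number of subgroups in $[L_\alpha,L]$ --- equivalently, the number of systems of imprimitivity of $L$ --- is at most the number of subsets of the $\ell-1$ nontrivial coset representatives of size at most $\log_2(\ell)$, namely $\sum_{j=0}^{\lfloor\log_2(\ell)\rfloor}\binom{\ell-1}{j}$, and an elementary estimate bounds this by $\ell^{\log_2(\ell)}$.

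The substance of the argument is the doubling estimate bounding the number of generators by $\log_2(\ell)$, together with the observation that $H$ is determined by the chosen generating subset $S_H$; these are the only points needing care. The concluding inequality $\sum_{j=0}^{\lfloor\log_2(\ell)\rfloor}\binom{\ell-1}{j}\le\ell^{\log_2(\ell)}$ is routine, the extremal case being $\ell=2$, which one checks directly. Notably, neither the O'Nan--Scott theorem nor the earlier lemmas of the paper are needed here.
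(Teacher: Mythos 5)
Your proposal is correct and takes essentially the same route as the paper: the paper's proof likewise identifies systems of imprimitivity with subgroups in the interval $[L_\delta,L]$ and notes that each such subgroup is generated by $L_\delta$ together with at most $\log_2(\ell)$ cosets of $L_\delta$, yielding at most $\ell^{\log_2(\ell)}$ choices. You merely spell out the doubling estimate $|H_i|\geq 2|H_{i-1}|$ and the injectivity of $H\mapsto S_H$, which the paper leaves implicit.
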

\begin{proof}
Let $\Delta$ be the set acted upon by $L$ and let $\delta\in \Delta$. The systems of imprimitivity of $L$ are in one-to-one correspondence with the subgroups of $L$ containing $L_\delta$.  Every subgroup $U$ of $L$ with $L_\delta\leq U$ is generated by $L_\delta$ and by some right cosets of $L_\delta$, that is, $U=\langle L_\delta, L_\delta x_1,\ldots,L_\delta x_v\rangle$ for some right cosets  $L_\delta x_1,\ldots,L_\delta x_v$ of $L$. As $|L:L_\delta|=|\Delta|=\ell$, we may choose $v\leq \log_2(\ell)$. It follows that there are at most $\ell^{\log_2(\ell)}$ choices for $U$.
\end{proof}

\begin{lemma}\label{basic3}
Let $g$ be a permutation of $\Omega$ and let $p$ the smallest prime dividing the order of $g$. Then $\orb_\Omega (g)\leq (|\Omega|+(p-1)\fix_\Omega(g))/p$ and $2^{\orb_\Omega(g)}=|\{\Delta\in 2^\Omega\mid \Delta^g=\Delta\}|$. In particular, $|\{\Delta\in 2^\Omega\mid \Delta^g=\Delta\}|\leq 2^{(|\Omega|+(p-1)\fix_\Omega(g))/p}$.
\end{lemma}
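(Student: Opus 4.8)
The plan is to prove the two displayed assertions separately and then deduce the ``In particular'' clause as an immediate consequence.

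First I would establish the identity $2^{\orb_\Omega(g)}=|\{\Delta\in 2^\Omega\mid \Delta^g=\Delta\}|$. The key observation is that a subset $\Delta$ satisfies $\Delta^g=\Delta$ if and only if $\Delta$ is invariant under the cyclic group $\langle g\rangle$, which happens precisely when $\Delta$ is a union of orbits of $\langle g\rangle$ on $\Omega$; these orbits are exactly the cycles appearing in the disjoint cycle decomposition of $g$. Since each of the $\orb_\Omega(g)$ cycles may independently be included in $\Delta$ or excluded from it, there are exactly $2^{\orb_\Omega(g)}$ such subsets.

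Next I would prove the inequality $\orb_\Omega(g)\leq(|\Omega|+(p-1)\fix_\Omega(g))/p$ by a length-counting argument on the cycles of $g$. Writing $n=|\Omega|$ and $f=\fix_\Omega(g)$, I would split the cycles of $g$ into the $f$ fixed points and the remaining $\orb_\Omega(g)-f$ non-trivial cycles. The crucial point is that every non-trivial cycle has length at least $p$: indeed, the length $\ell>1$ of any such cycle is divisible by some prime $q$, and since $\ell$ divides the order of $g$, this prime $q$ divides the order of $g$; by the minimality of $p$ we get $q\geq p$, whence $\ell\geq p$. Summing the cycle lengths therefore gives $n\geq f+p(\orb_\Omega(g)-f)$, and rearranging yields $\orb_\Omega(g)\leq f+(n-f)/p=(n+(p-1)f)/p$.

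Finally, combining the two statements and using the monotonicity of $x\mapsto 2^x$ gives the last inequality $|\{\Delta\in 2^\Omega\mid \Delta^g=\Delta\}|=2^{\orb_\Omega(g)}\leq 2^{(n+(p-1)f)/p}$. I do not expect a serious obstacle here, since everything is elementary; the only step requiring genuine care is the observation that the minimality of $p$ among the primes dividing the order of $g$ forces every non-trivial cycle to have length at least $p$. This is precisely what converts the arithmetic hypothesis on $g$ into the combinatorial bound on the number of cycles, and hence on the number of $g$-invariant subsets.
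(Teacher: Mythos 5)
Your proof is correct and takes essentially the same approach as the paper: the paper likewise bounds $\orb_\Omega(g)$ by noting that $g$ has cycles of size $1$ on $\Fix_\Omega(g)$ and of size at least $p$ elsewhere, giving $\orb_\Omega(g)\leq \fix_\Omega(g)+(|\Omega|-\fix_\Omega(g))/p$. The only difference is that you spell out the two details the paper treats as implicit or obvious (that minimality of $p$ forces nontrivial cycles to have length at least $p$, and that $g$-invariant subsets are exactly unions of cycles), and both are handled correctly.
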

\begin{proof}
The element $g$ has cycles of size $1$ on $\Fix_\Omega(g)$ and of size at least $p$ on $\Omega\setminus \Fix_\Omega(g)$. Thus $\orb_\Omega(g)\leq |\Fix_\Omega(g)|+|\Omega\setminus\Fix_\Omega(g)|/p=(|\Omega|+(p-1)\fix_\Omega(g))/p$. The rest of the lemma is obvious.
\end{proof}


\begin{lemma}\label{fact3}Let $M$ be a permutation group on $\Omega$. Then $$|\mathcal{F}(M)|\leq \sum_{C\in \mathcal{C}(M)}2^{\orb_\Omega(C)}\leq \sum_{C\in \mathcal{C}(M)}2^{\frac{|\Omega|}{2}+\frac{\fix_\Omega(C)}{2}}.$$
\end{lemma}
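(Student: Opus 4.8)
The plan is to bound $|\mathcal{F}(M)|$ by counting, for each subset $\Delta$ fixed by some non-identity element of $M$, a witnessing subgroup of prime order. The key observation is that if $g\in M\setminus\{1\}$ fixes $\Delta$ set-wise, then some power $g^k$ has prime order $p$ and still satisfies $\Delta^{g^k}=\Delta$; hence the cyclic group $\langle g^k\rangle\in\mathcal{C}(M)$ fixes $\Delta$. Therefore every $\Delta\in\mathcal{F}(M)$ is fixed by some $C\in\mathcal{C}(M)$, which gives the containment
\[
\mathcal{F}(M)\subseteq \bigcup_{C\in\mathcal{C}(M)}\{\Delta\in 2^\Omega\mid \Delta^C=\Delta\}.
\]

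First I would make this reduction explicit and then apply the union bound, so that
\[
|\mathcal{F}(M)|\leq \sum_{C\in\mathcal{C}(M)}\bigl|\{\Delta\in 2^\Omega\mid \Delta^C=\Delta\}\bigr|.
\]
Next, for a fixed $C=\langle c\rangle$ of prime order, a subset $\Delta$ is $C$-invariant precisely when it is a union of $C$-orbits on $\Omega$; since there are $\orb_\Omega(C)$ such orbits and each may independently be included or excluded, the count of $C$-invariant subsets is exactly $2^{\orb_\Omega(C)}$. This is the content of Lemma~\ref{basic3} (applied with $g=c$), which directly yields the first inequality in the statement.

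For the second inequality I would invoke the estimate on the number of orbits already recorded in Lemma~\ref{basic3}. Since $C$ has prime order, the smallest prime dividing $|c|$ is at least $2$, so $\orb_\Omega(C)\leq(|\Omega|+\fix_\Omega(C))/2$; substituting this bound into each exponent $2^{\orb_\Omega(C)}$ gives $2^{\orb_\Omega(C)}\leq 2^{\frac{|\Omega|}{2}+\frac{\fix_\Omega(C)}{2}}$ and hence the desired chain of inequalities.

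The argument is almost entirely bookkeeping; the only genuine point requiring care is the reduction to prime order, namely verifying that passing from an arbitrary non-identity $g$ fixing $\Delta$ to a prime-order power $g^k$ preserves the invariance of $\Delta$. This is immediate because $\Delta^g=\Delta$ forces $\Delta^{g^k}=\Delta$ for every integer $k$, but it is the step that justifies restricting the sum to $\mathcal{C}(M)$ rather than to all cyclic subgroups, so I would state it explicitly. Everything else follows mechanically from Lemma~\ref{basic3} and the union bound, so I do not anticipate any substantive obstacle.
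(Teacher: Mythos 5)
Your proof is correct and follows the same route as the paper: the paper's proof is the one-line observation that $\mathcal{F}(M)=\bigcup_{C\in\mathcal{C}(M)}\{\Delta\in 2^\Omega\mid \Delta^C=\Delta\}$ combined with Lemma~\ref{basic3}, and your argument simply spells out the details the paper leaves implicit (the reduction to a prime-order power of $g$, the union bound, and the estimate $\orb_\Omega(C)\leq(|\Omega|+\fix_\Omega(C))/2$ coming from the case $p=2$ of Lemma~\ref{basic3}). No gaps; the extra explicitness is fine.
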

\begin{proof}
As $\mathcal{F}(M)=\bigcup_{C\in\mathcal{C}(M)}\{\Delta\in 2^\Omega\mid \Delta^C=\Delta\}$, the proof follows from Lemma~\ref{basic3}.
\end{proof}

Let $G$ be a primitive group on $\Omega$ and let $M\in \mathcal{M}_{\textrm{PA}}(G)$ with $M\cong \Sym(m)\wr\Sym(\ell)$. Then the set $\Omega$ admits an $M$-invariant Cartesian decomposition $\Omega=\Delta_1\times \cdots\times \Delta_\ell$ with $|\Delta_i|=m\geq 5$ and $\ell\geq 2$. As $G\leq M$, the Cartesian decomposition $\Omega=\Delta_1\times \cdots \times \Delta_\ell$ is  also $G$-invariant. Conversely, if $\Omega=\Delta_1\times \cdots \times \Delta_\ell$ is a $G$-invariant Cartensian decomposition with $|\Delta_i|\geq 5$ and $\ell\geq 2$ then the permutation group $(\Sym(\Delta_1)\times \cdots \times\Sym(\Delta_\ell))\wr\Sym(\ell)\cong \Sym(|\Delta_1|)\wr\Sym(\ell)$  contains $G$ and is maximal (see~\cite[Theorem]{LPS1}), and hence it belongs to $\mathcal{M}_{\mathrm{PA}}(G)$. Thus we have shown the following.
\begin{lemma}\label{CDlemma}Let $G$ be a primitive group on $\Omega$. The elements of $\mathcal{M}_{\mathrm{PA}}(G)$ are in one-to-one correspondence with the $G$-invariant Cartesian decompositions $\Omega=\Delta_1\times \cdots\times \Delta_\ell$ with $|\Delta_i|\geq 5$ and $\ell\geq 2$.
\end{lemma}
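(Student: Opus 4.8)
The plan is to exhibit the asserted bijection explicitly and then check that the two constructions are mutually inverse, leaning on the classification of Liebeck, Praeger and Saxl~\cite{LPS1} for every maximality statement. In the forward direction I would start from an element $M\in\mathcal{M}_{\mathrm{PA}}(G)$. By the definition of the PA type together with the description of the maximal subgroups of $\Sym(\Omega)$ in~\cite{LPS1}, such an $M$ is $\Sym(m)\wr\Sym(\ell)$ acting on $\Omega$ in its product action, and this action carries a canonical Cartesian decomposition $\Omega=\Delta_1\times\cdots\times\Delta_\ell$ with $|\Delta_i|=m\geq 5$ and $\ell\geq 2$. Since $G\leq M$ and the decomposition is $M$-invariant, it is a fortiori $G$-invariant; this assigns to each $M$ a $G$-invariant Cartesian decomposition of the required shape.

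In the reverse direction I would start from a $G$-invariant Cartesian decomposition $\Omega=\Delta_1\times\cdots\times\Delta_\ell$ with $|\Delta_i|\geq 5$ and $\ell\geq 2$ and take its full set-wise stabiliser in $\Sym(\Omega)$, namely $W=(\Sym(\Delta_1)\times\cdots\times\Sym(\Delta_\ell))\wr\Sym(\ell)\cong\Sym(|\Delta_1|)\wr\Sym(\ell)$ in product action. The hypotheses $|\Delta_i|\geq 5$ and $\ell\geq 2$ are exactly the conditions under which~\cite[Theorem]{LPS1} guarantees that $W$ is maximal, i.e. a maximal element of $\mathcal{O}(G)\setminus\{\Alt(\Omega),\Sym(\Omega)\}$. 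As $G$ preserves the decomposition we have $G\leq W$, and $W$ is of PA type, so $W\in\mathcal{M}_{\mathrm{PA}}(G)$. This produces the candidate inverse map.

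It then remains to verify that the two assignments are mutually inverse. Passing from $M$ to its decomposition and back returns $M$ itself, because $M\cong\Sym(m)\wr\Sym(\ell)$ in product action \emph{is} the full stabiliser of the decomposition it defines; alternatively, $M$ is contained in that stabiliser, which is again a proper PA-type overgroup of $G$, so maximality of $M$ forces equality. Passing from a decomposition to $W$ and back returns the original decomposition, provided the coordinate factors $\Delta_i$ can be read off from the group $W$ alone, for instance as the orbits on $\Omega$ of the natural direct factors of the socle $\Sym(|\Delta_1|)^\ell$. I expect this recovery step to be the main obstacle: one must argue that the Cartesian decomposition is a genuine group-theoretic invariant of $W$, so that distinct decompositions cannot collapse to the same stabiliser and the decomposition is determined up to the (immaterial) permutation of the factors effected by the top group $\Sym(\ell)$. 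Once the product action is shown to pin down the decomposition in this way, both compositions are the identity and the stated one-to-one correspondence follows.
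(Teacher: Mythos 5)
Your proposal is correct and takes essentially the same route as the paper: the forward map reads off the canonical $M$-invariant decomposition from $M\cong\Sym(m)\wr\Sym(\ell)$ in product action, and the inverse assigns to a $G$-invariant decomposition its full stabiliser $(\Sym(\Delta_1)\times\cdots\times\Sym(\Delta_\ell))\wr\Sym(\ell)$, invoking~\cite[Theorem]{LPS1} for maximality exactly as the paper does. Your additional verification that the maps are mutually inverse (recovering the coordinate partitions from the orbits of the direct factors of the socle, and using maximality of $M$ to force equality with the stabiliser) is left implicit in the paper but is sound and routine.
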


Given a positive integer $m$ and $n=m^2$, we define 
\begin{eqnarray}
F_0(n)&=&2^{\frac{n}{2}+\frac{\sqrt{n}}{2}+(\sqrt{n}-1)\log_2(\sqrt{n})},\label{eq:def}\\
F'(n)&=&2^{n-\frac{11}{2}\sqrt{n}+8(\log_2(\sqrt{n}))^2-4\log_2(\sqrt{n})},\label{eq:def1}\\
F''(n)&=&2^{n+2\sqrt{n}\log_2(0.3967)+\sqrt{n}}.\label{eq:def2}
\end{eqnarray}
Moreover, for every prime number $p$ and for every non-negative integers $i$ and $j$ with $0<p(i+j)<11$, we define
\begin{equation}\label{eq:def3}
F_{i,j}^p(n)=\frac{2^{n-(i+j)(p-1)\sqrt{n}+ijp(p-1)}\sqrt{n}!^2}{(p-1)(\sqrt{n}-pi)!(\sqrt{n}-pj)!p^{i+j}i!j!}.
\end{equation}
Finally, set
\begin{eqnarray}\label{eq000}
F(n)=F_0(n)+F'(n)+F''(n)+\sum_{\substack{p \textrm{ prime},i,j\geq 0\\0<p(i+j)<11}}F_{i,j}^p(n).
\end{eqnarray}

Observe that each $F_{i,j}^p(n)$ can be written as an elementary function in $\sqrt{n}$. For example, when $(p,i,j)=(2,2,1)$, we have
$F_{2,1}^2(n)=2^{n-3\sqrt{n}+4}{\sqrt{n}^2(\sqrt{n}-1)^2(\sqrt{n}-2)(\sqrt{n}-3)}/16$. In particular, since we have only a handful of triples $(p,i,j)$ with $0<p(i+j)<11$, the function $F(n)$ is relatively easy and can be efficiently implemented in a computer.

\begin{lemma}\label{basic2}Let $m$ and $\ell$ be positive integers with $m\geq 5$ and $\ell\geq 2$, and let $n=m^\ell$. Let $M$ be the wreath product $\Sym(m)\wr\Sym(\ell)$ endowed of its natural product action on 
$\Omega=\Delta^\ell$ with $\Delta=\{1,\ldots,m\}$. If $\ell\geq 3$, or if $\ell=2$ and $m\geq 36$, then
$|\mathcal{F}(M)|\leq 
F(n)$.
(The function $F(n)$ is defined in Eq.~$\eqref{eq000}$.)
\end{lemma}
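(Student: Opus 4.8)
The plan is to bound $|\mathcal F(M)|$ through Lemma~\ref{fact3}, which reduces the task to estimating $\sum_{C\in\mathcal C(M)}2^{\orb_\Omega(C)}$, and to organise the prime-order subgroups $C=\langle g\rangle$ of $M$ according to the projection $\tau\in\Sym(\ell)$ of $g$ onto the top group. Writing $g=(\sigma_1,\ldots,\sigma_\ell;\tau)$ with $\sigma_i\in\Sym(m)$, the order of $\tau$ is $1$ or $p$, where $p$ is the prime order of $g$. The first step is the fixed-point formula: a point of $\Omega=\Delta^\ell$ is fixed by $g$ exactly when its coordinates are constant along each $\tau$-cycle and invariant under the corresponding cycle product, so that $\fix_\Omega(g)=\prod_{c}\fix_\Delta(\sigma_c)$, the product running over the cycles $c$ of $\tau$ with cycle products $\sigma_c$. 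Feeding this into Lemma~\ref{basic3} gives $\orb_\Omega(g)\le(n+(p-1)\fix_\Omega(g))/p$, which is the only bound on orbit numbers I will use.

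I would next treat the bulk case $\tau=1$ with $\ell=2$, which drives the definition of $F(n)$. Here $g=(\sigma_1,\sigma_2)$ with each $\sigma_i$ a product of $p$-cycles; let $i$ and $j$ be the numbers of $p$-cycles of $\sigma_1$ and $\sigma_2$. Then $\fix_\Omega(g)=(\sqrt n-pi)(\sqrt n-pj)$ and the displayed bound rearranges to
\begin{equation*}
\orb_\Omega(g)\le n-(p-1)(i+j)\sqrt n+(p-1)p\,ij,
\end{equation*}
which is precisely the exponent appearing in $F_{i,j}^p(n)$ in Eq.~\eqref{eq:def3}. A standard count shows that the number of $\sigma\in\Sym(\sqrt n)$ with exactly $i$ nontrivial cycles (all $p$-cycles, as $\sigma$ has order dividing $p$) is $\sqrt n!/((\sqrt n-pi)!\,p^i\,i!)$, and since the $p-1$ generators of $C$ share the same cycle-type profile in each coordinate, the number of such subgroups $C$ is exactly the prefactor of $F_{i,j}^p(n)$. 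Summing $2^{\orb_\Omega(C)}$ over the profiles with $0<p(i+j)<11$ therefore reproduces the explicit terms $\sum F_{i,j}^p(n)$ of Eq.~\eqref{eq000}.

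The remaining contributions I would bound in bulk. For $\tau\ne1$ (still $\ell=2$, so $p=2$ and $\tau=(1\,2)$) one checks $g=(\sigma_1,\sigma_1^{-1};(1\,2))$, whence the single $\tau$-cycle has trivial product and $\fix_\Omega(g)=\sqrt n$; there are $\sqrt n!$ such involutions, each generating a distinct subgroup, so using $\sqrt n!\le(\sqrt n)^{\sqrt n-1}$ (valid for $m\ge36$) the whole contribution is at most $2^{n/2+\sqrt n/2+(\sqrt n-1)\log_2\sqrt n}=F_0(n)$ of Eq.~\eqref{eq:def}. The genuinely delicate part is the tail of the base case $\tau=1$ with $p(i+j)\ge11$: here I would sum $(\text{number of subgroups})\times2^{\orb_\Omega(C)}$ over all such triples $(p,i,j)$, exploiting that the decay of the exponent $n-(p-1)(i+j)\sqrt n+(p-1)p\,ij$ outruns the factorial growth of the subgroup count, and show the total is at most $F'(n)+F''(n)$ of Eqs.~\eqref{eq:def1}--\eqref{eq:def2}. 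The threshold $11$ and the constant $0.3967$ are calibrated exactly so that this multivariable tail sum collapses, after crude per-coordinate estimates and a geometric-series argument, into the two closed forms; this estimate is where essentially all the work lies and is the main obstacle.

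Finally, the case $\ell\ge3$ I expect to be much easier and strictly dominated by $F(n)$. Across all $\tau$, the element with the most fixed points is again a single transposition in one coordinate with $\tau=1$, giving $\fix_\Omega(g)\le(m-2)m^{\ell-1}$ and hence $\orb_\Omega(g)\le n-m^{\ell-1}\le n-2\sqrt n$, since $m^{\ell-1}\ge m^{(\ell+1)/2}=\sqrt n\,\sqrt m\ge2\sqrt n$ for $\ell\ge3$ and $m\ge5$; every element with $\tau\ne1$ has $\fix_\Omega(g)\le m^{\ell-1}$ and hence $\orb_\Omega(g)$ near $n/2$, which is far smaller. As the number of prime-order subgroups contributing at the dominant level is only polynomial in $m$, the entire $\ell\ge3$ total is swamped by the single term $F_{1,0}^2(n)=\tfrac12(n-\sqrt n)2^{\,n-\sqrt n}$, so $|\mathcal F(M)|\le F(n)$ holds with enormous slack; a handful of the smallest pairs $(m,\ell)$ can be confirmed directly should the asymptotic estimate not yet be decisive.
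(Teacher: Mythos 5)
Your skeleton coincides with the paper's: both proofs run through Lemma~\ref{fact3} and the orbit bound of Lemma~\ref{basic3}, evaluate the classes $\mathcal{C}_{i,j}^p$ exactly (your count of subgroups and the exact exponent $n-(p-1)(i+j)\sqrt{n}+p(p-1)ij$ reproduce $F_{i,j}^p(n)$ verbatim), and handle the $\tau\neq 1$ involutions $(h,h^{-1})(1\,2)$ via $\fix_\Omega(g)=\sqrt{n}$ and the count $\sqrt{n}!\leq \sqrt{n}^{\sqrt{n}-1}$, giving $F_0(n)$. The genuine gap is exactly where you flag ``the main obstacle'': the tail $p(i+j)\geq 11$ is never actually bounded, and the route you sketch (a profile-by-profile geometric series over triples $(p,i,j)$) is not the mechanism that produces $F'(n)+F''(n)$. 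The paper instead fixes a fixed-point threshold $c=\lfloor m-4\log_2(m)+3\rfloor$ and splits the tail into $\mathcal{C}'$ (both coordinates fixing at least $c$ points of $\Delta$) and $\mathcal{C}''$ (some coordinate fixing fewer than $c$). For $\mathcal{C}'$ a finite check of the triples $(r,x,y)$ with $r(x+y)\geq 11$, $rx<11$, $ry<11$ yields $\fix_\Omega(g)\leq m(m-11)$ --- this is the only place the hypothesis $m\geq 36$ does real work, e.g.\ $(r,x,y)=(2,3,3)$ gives $(m-6)^2>m(m-11)$ precisely when $m<36$ --- and this is paired with the count $|\mathcal{C}'|\leq (m!/c!)^2<m^{2(m-c)}$; for $\mathcal{C}''$ the crude count $m!^2\leq(0.3967\,m)^{2m}$ is paired with the strong bound $\fix_\Omega(g)\leq(c-1)m$. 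The shapes $8(\log_2\sqrt{n})^2-4\log_2\sqrt{n}$ in $F'$ and $2\sqrt{n}\log_2(0.3967)+\sqrt{n}$ in $F''$ are artifacts of exactly this choice of $c$ (and $0.3967$ is merely the Stirling constant for $m\geq 36$, not a tail calibration), so even if your multivariable series converges as you expect, landing underneath these specific closed forms is a separate verification you have not supplied.

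The $\ell\geq 3$ case is also not established as stated. The paper's argument there is the crude uniform bound $\fix_\Omega(g)\leq(m-2)m^{\ell-1}$ for every $g\neq 1$ (elements with $\tau\neq 1$ satisfy the stronger $\fix_\Omega(g)\leq m^{\ell-1}$), hence $|\mathcal{F}(M)|\leq 2^{n-m^{\ell-1}}m!^\ell\ell!$, followed by Stirling and a comparison with the \emph{whole} of $F(n)$ --- a comparison that succeeds at small $n$ only because $F'(n)$ is generous there, owing to its $8(\log_2\sqrt{n})^2$ term. Your stronger claim that the total is swamped by the single summand $F_{1,0}^2(n)=\tfrac12(n-\sqrt{n})2^{n-\sqrt{n}}$ ``with enormous slack,'' justified only by counting subgroups at the dominant fixed-point level, is not a proof and is numerically delicate: at $(m,\ell)=(5,3)$ the natural uniform bound $2^{n-m^{\ell-1}}m!^\ell\ell!\approx 2^{123.3}$ already exceeds $F_{1,0}^2(125)\approx 2^{119.6}$, so making your comparison rigorous would force you to sum over all fixed-point levels with their subgroup counts (which are super-polynomial once the support or $\ell$ grows) --- the very work you defer to ``a handful of direct checks.'' Since both the $\ell=2$ tail and the $\ell\geq 3$ estimate are deferred, the two computations carrying essentially all the content of the lemma are missing; what you do prove coincides with the routine, exactly-computable part of the paper's proof. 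A minor inaccuracy: for $\tau\neq 1$ the fixed points of $g$ are not the tuples whose coordinates are ``constant along each $\tau$-cycle''; the coordinates along a cycle are determined from one of them via the base permutations (as in the paper's computation $\delta_k=\delta_1^{(h_k)^{-1}}$, etc.), although your resulting formula $\fix_\Omega(g)=\prod_c\fix_\Delta(\sigma_c)$ over cycle products is correct.
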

\begin{proof}

Let $B=\Sym(\Delta)^\ell$ be the base group of $M$. We denote the elements of $M$ by $(h_1,\ldots,h_\ell)\sigma$, with $\sigma\in \Sym(\ell)$ and $h_1,\ldots,h_\ell\in \Sym(\Delta)$.

Let $g=(h_1,\ldots,h_\ell)\sigma\in M$, with $\sigma\neq 1$. We claim that $\fix_\Omega(g)\leq m^{\ell-1}$. Relabelling the index set $\{1,\ldots,\ell\}$ if necessary, we may assume that
$(1,\ldots, k)$ is a non-identity cycle of $\sigma$.  Let $\omega=(\delta_1,\ldots,\delta_\ell)\in \Omega$.  Now, 
\[
\omega^g=(\delta_k^{h_k},\delta_1^{h_1},\cdots,\delta_{k-2}^{h_{k-2}},\delta_{k-1}^{h_{k-1}},\delta_{k+1}',\ldots,\delta_{\ell}')
\]
for some $\delta_{k+1}',\ldots,\delta_{\ell}'\in\Delta$. In particular, if $\omega^g=\omega$, then 
$\delta_1=\delta_k^{h_k},\,\delta_2=\delta_1^{h_1},\ldots,\,\delta_{k}=\delta_{k-1}^{h_{k-1}}$, that is,
\[
\delta_k=\delta_1^{(h_k)^{-1}},\,\delta_{k-1}=\delta_1^{(h_{k-1}h_k)^{-1}},\,\ldots,\,\delta_2=\delta_1^{(h_2\cdots h_{k-1}h_k)^{-1}}.
\]
From this we deduce that the first $k$ coordinates of $\omega$ are uniquely determined  by the first coordinate $\delta_1$ of $\omega$. Therefore $\fix_\Omega(g)\leq m^{\ell-1}$.

Let now $g=(h_1,\ldots,h_\ell)\in B$ with $g\neq 1$. An easy computation shows that $\fix_\Omega(g)=\prod_{i=1}^\ell\fix_{\Delta}(h_i)\leq (m-2)m^{\ell-1}$.  Thus, as $m\geq 5$, we have $(m-2)m^{\ell-1}>m^{\ell-1}$ and hence
 $\fix_\Omega(g)\leq (m-2)m^{\ell-1}$, for every $g\in M$ with $g\neq 1$. Therefore Lemma~\ref{fact3} gives
\[
|\mathcal{F}(M)|\leq \sum_{C\in \mathcal{C}(M)}2^{n-m^{\ell-1}}<2^{n-m^{\ell-1}}|M|=2^{n-m^{\ell-1}}m!^\ell\ell!.
\]
Assume that $\ell\geq 3$. Using  Stirling's formula, with a computation, we get $m!\leq (0.5211\cdot m)^m$  for $m\geq 5$, and hence
\begin{eqnarray*}
m!^\ell\ell!&\leq &(0.5211\cdot m)^{m\ell}\ell^{\ell-1}=2^{m(\log_2(n)+\ell\log_2(0.5211))+(\ell-1)\log_2(\ell)}\\
&\leq& 2^{n^{1/3}(\log_2(n)+3\log_2(0.5211))+2\log_2(3)},
\end{eqnarray*}
where the last inequality follows  with a computation. Another computation gives
$$2^{n-n^{2/3}+n^{1/3}(\log_2(n)+3\log_2(0.5211))+2\log_2(3)}<F(n),$$
and hence this concludes the proof when $\ell\geq 3$.

Assume that $\ell=2$ and $m\geq 36$. Fix $c$ with $0\leq c\leq m-11$. We use Lemma~\ref{fact3} to bound $|\mathcal{F}(M)|$. Let $g=(h_1,h_2)\sigma\in M$ with $\sigma\neq 1$ and with $|g|$ prime. Then $|g|=2$, $\sigma=(1,2)$ and $1=g^2=(h_1h_2,h_2h_1)$,  hence $h_2=h_1^{-1}$ and $g=(h_1,h_1^{-1})\sigma$. Now, an easy computation shows that $\Fix_\Omega(g)=\{(\delta,\delta^{h_1})\mid \delta\in \Delta\}$, hence $\fix_\Omega(g)= m$ and $2^{\orb_\Omega(g)}=2^{n/2+\sqrt{n}/2}$. Since $m!=\sqrt{n}!\leq \sqrt{n}^{\sqrt{n}-1}=2^{(\sqrt{n}-1)\log_2(\sqrt{n})}$, we get $$\sum_{C\in\mathcal{C}(M),C\nleq B}2^{\orb_\Omega(g)}\leq 2^{\frac{n}{2}+\frac{\sqrt{n}}{2}+(\sqrt{n}-1)\log_2(\sqrt{n})}\overset{(\textrm{see }\eqref{eq:def})}{=}F_0(n).$$

We now focus on the subgroups of prime order of $B$. Given $h\in \Sym(\Delta)$ with $|h|=p$ a prime number, we say that $h$ is of type $p^i$ if $h$ is the product of $i$ cycles of length $p$. For each prime number $p$ and non-negative integers $i,j$ with $0<p(i+j)<11$, define
\begin{align*}
&\mathcal{C}_{i,j}^p&&=\{\langle(h_1,h_2)\rangle\in \mathcal{C}(M)\mid h_1\textrm{ of type }p^i \textrm{ and }h_2\textrm{ of type }  p^j\textrm{ on }\Delta\},\\
&\mathcal{C}'&&=\{\langle(h_1,h_2)\rangle\in\mathcal{C}(M)\mid\langle(h_1,h_2)\rangle\notin\mathcal{C}_{i,j}^p \textrm{ for every }p,i,j,\, \mathrm{fix}_\Delta(h_1)\geq c \textrm { and }\mathrm{fix}_\Delta(h_2)\geq c\},\\
&\mathcal{C}''&&=\{\langle(h_1,h_2)\rangle\in\mathcal{C}(M)\mid \mathrm{fix}_\Delta(h_1)<c \textrm{ or }\mathrm{fix}_\Delta(h_2)<c\}.
\end{align*}
Observe that $\mathcal{C}'$ is disjoint from $\mathcal{C}''$. Moreover, for every $\langle (h_1,h_2)\rangle\in \mathcal{C}_{i,j}^p$, we have $\fix_\Omega(h_1)=m-pi\geq m-10$ and $\fix_\Omega(h_2)=m-pj\geq m-10$. As $c<m-10$, we get that $\mathcal{C}_{i,j}^p$ is disjoint from $\mathcal{C}''$. Thus the sets $\mathcal{C}_{i,j}^p,\mathcal{C}',\mathcal{C}''$ are pair-wise disjoint. Furthermore, every subgroup of prime order of $B$ lies in exactly one of $\mathcal{C}_{i,j}^p,\mathcal{C}',\mathcal{C}''$. 

Observe that $\Sym(m)$ contains $\frac{m!}{(m-pi)!p^ii!}$ elements of type $p^i$ and hence $$|\mathcal{C}_{i,j}^p|=\frac{1}{p-1}\cdot\frac{m!^2}{(m-pi)!(m-pj)!p^{i+j}i!j!}.$$
Given $C\in \mathcal{C}_{i,j}^p$, we have $\fix_\Omega(C)=(m-pi)(m-pj)$ and $C$ has orbits of size $p$ on $\Omega\setminus\Fix_\Omega(C)$. Thus 
\begin{eqnarray*}
\sum_{C\in \mathcal{C}_{i,j}^p}2^{\orb_\Omega(C)}&=&
\frac{2^{(m-pi)(m-pj)+\frac{n-(m-pi)(m-pj)}{p}}m!^2}{(p-1)(m-pi)!(m-pj)!p^{i+j}i!j!}\\
&=&\frac{2^{n-(i+j)(p-1)m+ij(p^2-p)}m!^2}{(p-1)(m-pi)!(m-pj)!p^{i+j}i!j!}\overset{(\textrm{see }\eqref{eq:def3})}{=}F_{i,j}^p(n)
.
\end{eqnarray*} 

Set $\varepsilon=0.3967$.  Using  Stirling's formula we get $m!\leq (\varepsilon m)^m$  for $m\geq 36$.
Clearly,
$$|\mathcal{C}'|\leq \left({m\choose c}(m-c)!\right)^2=\left(\frac{m!}{c!}\right)^2< m^{2(m-c)}=2^{2(m-c)\log_2(m)}$$
and 
$$|\mathcal{C}''|<m!^2<(\varepsilon m)^{2m}=2^{2m(\log_2(m)+\log_2(\varepsilon))}.$$

Let $C$ be a subgroup of $B$ of prime order $r$ with $C\notin \bigcup_{i,j,p}\mathcal{C}_{i,j}^p$, and write $C=\langle g\rangle$ with $g=(h_1,h_2)$.  We claim that if $C\in \mathcal{C}'$, then $\fix_\Omega(g)\leq m(m-11)$. We argue by contradiction and we assume that $\fix_\Omega(g)>m(m-11)$. Assume that $h_1$ has type $r^x$ and $h_2$ has type $r^y$. As $C$ is not in any of the sets $\mathcal{C}_{i,j}^p$, we have $r(x+y)\geq 11$. Observe that $\fix_\Omega(g)=\fix_\Delta(h_1)\fix_\Delta(h_2)=(m-rx)(m-ry)\leq m(m-rx)$ and hence $rx<11$. Similarly, $ry<11$.  In particular, we have only a handful of triples $(r,x,y)$ with $r(x+y)\geq 11$, $rx<11$ and $ry<11$. By studying each of these triples in turn and using $m\geq 36$,  we obtain that the inequality $(m-rx)(m-ry)>m(m-11)$ is never satisfied. We do not give the full argument here, but we simply deal with the case that $(r,x,y)=(2,3,3)$ (all the other cases are similar). Now, $(m-rx)(m-ry)=(m-6)^2$ and the inequality $(m-6)^2>m(m-11)$ hold true only if $36>m$, a contradiction.

From the previous paragraph it follows that $2^{\orb_\Omega(C)}\leq 2^{n-\frac{11}{2}m}$ when $C\in \mathcal{C}'$. Let $C$ be in $\mathcal{C}''$ and write $C=\langle g\rangle$ with $g=(h_1,h_2)$. Now, $\fix_\Omega(g)=\fix_\Delta(h_1)\fix_\Delta(h_2)\leq (c-1)m$. Thus $2^{\orb_\Omega(C)}\leq 2^{n/2+(c-1)m/2}$ when $C\in \mathcal{C}''$. Therefore
$$\sum_{C\in \mathcal{C}'\cup\mathcal{C}''}2^{\orb_\Omega(C)}\leq 2^{n-\frac{11}{2}m+2(m-c)\log_2(m)}+2^{\frac{n}{2}+\frac{(c-1)m}{2}+2m(\log_2(m)+\log_2(\varepsilon))}.$$
Set $c=\lfloor m-4\log_2(m)+3\rfloor$ and observe that $c\leq m-11$ for $m\geq 36$. Now,  with a careful computation we get
$$\sum_{C\in \mathcal{C}'\cup\mathcal{C}''}2^{\orb_\Omega(C)}\leq 2^{n-\frac{11}{2}m+8(\log_2(m))^2-4\log_2(m)}+2^{n+2m\log_2(\varepsilon)+m}\overset{(\textrm{see }\eqref{eq:def1},\,\eqref{eq:def2})}{=}F'(n)+F''(n).$$
Now the proof follows from the definition of $F(n)$.
\end{proof}

Our choice of $c$ in the proof of Lemma~\ref{basic2}  is not asymptotically best possible, however it is the formulation that best suits our application.


Let $m$  be a positive integer.  Set
\begin{align*}
G'(m)&=2^{{m\choose 2}-\frac{11}{2}m+33+2(\log_2(m))^2+\log_2(m)},\\
G''(m)&=2^{{m\choose 2}-\frac{1}{2}m+\log_2(0.4)m+(\log_2(m))^2+2\log_2(m)+\frac{3}{4}}.
\end{align*}
For every prime number $p$ and positive integer $i$ with $ip<11$, we set
\begin{equation}\label{eq:def6}
G_i^p(m)=
\begin{cases}
2^{{m\choose 2}-i(p-1)m+\frac{i^2p(p-1)}{2}+\frac{i(p-1)}{2}}\cdot\frac{m!}{(p-1)(m-pi)!p^ii!}&\textrm{if }p>2,\\
2^{{m\choose 2}-im+i^2+i}\cdot\frac{m!}{(m-2i)!2^ii!}&\textrm{if }p=2.
\end{cases}
\end{equation}
Finally, we define
\begin{equation}\label{eq0000}
G(m)=G'(m)+G''(m)+\sum_{\substack{i\geq 1,p \textrm{ prime}\\ip<11}}G_{i}^p(m).
\end{equation}

\begin{lemma}\label{basic22}Let $m$  be a positive integer with $m\geq 32$ and let $M$ be the symmetric group $\Sym(m)$ in its natural action on the $2$-subsets of $\{1,\ldots,m\}$. Then $|\mathcal{F}(M)|\leq G(m)$. (The function $G(m)$ is defined in Eq.~$\eqref{eq0000}$.)
\end{lemma}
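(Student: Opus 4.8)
The plan is to mimic the structure of the proof of Lemma~\ref{basic2}, now with $|\Omega|={m\choose 2}$ and $M=\Sym(m)$ acting directly on the $2$-subsets of $\{1,\dots,m\}$. First I would invoke Lemma~\ref{fact3} to reduce everything to estimating $\sum_{C\in\mathcal{C}(M)}2^{\orb_\Omega(C)}$, and then sort the subgroups $C\in\mathcal{C}(M)$ by the cycle type of a generator: a generator $g$ of prime order $p$ is a product of $i\geq 1$ disjoint $p$-cycles and moves exactly $pi$ points. A $2$-subset $\{a,b\}$ is fixed by $g$ precisely when $\{a^g,b^g\}=\{a,b\}$, that is, when both $a,b\in\Fix_\Omega(g)$, or when $\{a,b\}$ is one of the $i$ transposed pairs of $g$ (the latter only when $p=2$). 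Hence $\fix_\Omega(C)={m-pi\choose 2}$ when $p>2$ and $\fix_\Omega(C)={m-2i\choose 2}+i$ when $p=2$; moreover $\Sym(m)$ has $m!/((m-pi)!p^ii!)$ elements of this type, so the number of such subgroups is $m!/((p-1)(m-pi)!p^ii!)$. Since all $p-1$ nontrivial elements of $C$ share the same cycle type and every non-fixed $C$-orbit on $\Omega$ has length $p$, one has the exact value $\orb_\Omega(C)=({m\choose 2}+(p-1)\fix_\Omega(C))/p$.

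For the subgroups of small support, those with $pi<11$, I would collect them into the families $\mathcal{C}_i^p$ and substitute the formulas above. Expanding $({m\choose 2}+(p-1){m-pi\choose 2})/p$ gives exponent ${m\choose 2}-i(p-1)m+i^2p(p-1)/2+i(p-1)/2$ when $p>2$, while the analogous expansion with the extra $+i$ gives ${m\choose 2}-im+i^2+i$ when $p=2$; multiplying by the cardinality of $\mathcal{C}_i^p$ shows that $\sum_{C\in\mathcal{C}_i^p}2^{\orb_\Omega(C)}$ equals $G_i^p(m)$ in both cases, matching Eq.~\eqref{eq:def6}. This step is pure bookkeeping, exactly parallel to the computation of $F_{i,j}^p(n)$ in Lemma~\ref{basic2}.

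The remaining work is the subgroups with $pi\geq 11$, and here I would introduce the threshold $c=\lfloor m-2\log_2(m)\rfloor$ and split them into $\mathcal{C}'$, those whose generator fixes at least $c$ points, and $\mathcal{C}''$, those whose generator fixes fewer than $c$ points (for small $m$ the family $\mathcal{C}'$ may be empty, which is harmless). For $\mathcal{C}'$ the support satisfies $11\leq pi\leq m-c$, so examining the finitely many admissible cycle types $p^i$ yields $\orb_\Omega(C)\leq {m\choose 2}-\frac{11}{2}m+33$ for $m\geq 32$, the largest orbit number ${m\choose 2}-6m+42$ occurring at $p=2$, $i=6$; combined with $|\mathcal{C}'|\leq m!/c!<m^{m-c}=2^{(m-c)\log_2(m)}$ and $m-c<2\log_2(m)+1$, this gives $\sum_{C\in\mathcal{C}'}2^{\orb_\Omega(C)}\leq G'(m)$. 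For $\mathcal{C}''$ I would use the weaker bound $\orb_\Omega(C)\leq({m\choose 2}+\fix_\Omega(C))/2$ of Lemma~\ref{fact3}; since $\fix_\Omega(C)$ over $\mathcal{C}''$ is maximal at $\fix_\Omega(g)=c-1$ with $p=2$, one has $\fix_\Omega(C)\leq {c-1\choose 2}+(m-c+1)/2$, and bounding $|\mathcal{C}''|\leq m!\leq(0.4m)^m$ via Stirling (valid for $m\geq 32$) yields the estimate. The decisive feature of the choice $c=\lfloor m-2\log_2(m)\rfloor$ is that the term $m\log_2(m)$ produced by $m!$ cancels exactly against the $-m\log_2(m)$ hidden inside $\frac{1}{2}{c-1\choose 2}$, leaving precisely the exponent ${m\choose 2}-\frac{1}{2}m+\log_2(0.4)m+(\log_2 m)^2+2\log_2 m+\frac{3}{4}$, so that $\sum_{C\in\mathcal{C}''}2^{\orb_\Omega(C)}\leq G''(m)$.

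Adding the three contributions gives $|\mathcal{F}(M)|\leq G'(m)+G''(m)+\sum_{ip<11}G_i^p(m)=G(m)$, as required by Eq.~\eqref{eq0000}. The main obstacle is not conceptual but the careful tracking of the lower-order constants in the $\mathcal{C}''$ estimate: one must check that $m!\leq(0.4m)^m$ already holds at $m=32$, and verify that with $c=\lfloor m-2\log_2(m)\rfloor$ the leading $m\log_2(m)$ terms cancel and the residual polylogarithmic terms assemble exactly into the exponent of $G''(m)$. As in Lemma~\ref{basic2}, the precise value of $c$ is dictated entirely by forcing these constants to close up, whereas the finite case-check bounding $\orb_\Omega(C)$ on $\mathcal{C}'$ is routine.
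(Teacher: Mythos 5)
Your proposal is correct and follows essentially the same route as the paper: the same reduction via Lemma~\ref{fact3}, the same partition into the families $\mathcal{C}_i^p$ ($ip<11$), $\mathcal{C}'$ and $\mathcal{C}''$ with the same threshold $c=\lfloor m-2\log_2(m)\rfloor$, the same counts $|\mathcal{C}_i^p|=m!/((p-1)(m-pi)!\,p^i i!)$, $|\mathcal{C}'|\leq m!/c!$, $|\mathcal{C}''|<(0.4m)^m$, and the same exact evaluation of $\orb_\Omega(C)$ yielding $G_i^p(m)$. The only cosmetic difference is in $\mathcal{C}'$, where you maximise $\orb_\Omega(C)$ directly over admissible cycle types (correctly locating the maximum $\binom{m}{2}-6m+42$ at $p=2$, $i=6$, which is $\leq\binom{m}{2}-\frac{11}{2}m+33$ for $m\geq 18$), whereas the paper bounds $\fix_\Omega(C)\leq\binom{m-11}{2}$ (treating involutions separately) and then applies the generic bound $\orb_\Omega(C)\leq\frac{1}{2}\bigl(\binom{m}{2}+\fix_\Omega(C)\bigr)$; both land on the exponent of $G'(m)$.
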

\begin{proof}
We denote by $\Delta$ the set $\{1,\ldots,m\}$ and by $\Omega$ the set of $2$-subsets of $\Delta$. Fix $c$ with $c\leq m-11$.
For each prime $p$ and positive integer $i$ with $ip<11$, we set
$$\mathcal{C}_i^p=\{\langle g\rangle\in\mathcal{C}(M)\mid g\textrm{ has type }p^i \textrm{ on }\Delta\}.$$ 
Moreover, define
\begin{align*}
\mathcal{C}'&=\{\langle g\rangle\in\mathcal{C}(M)\mid \langle g\rangle\notin \mathcal{C}_i^p \textrm{ for every }p\textrm{ and }i,\,\mathrm{fix}_\Delta(g)\geq c\},\\
\mathcal{C}''&=\{\langle g\rangle\in\mathcal{C}(M)\mid\mathrm{fix}_\Delta(g)<c\}.
\end{align*}
By construction the sets $\mathcal{C}_i^p,\mathcal{C}',\mathcal{C}''$ are pair-wise disjoint, and $\mathcal{C}(M)=\bigcup_{p,i}\mathcal{C}_i^p\cup\mathcal{C}'\cup\mathcal{C}''$.

Clearly, $|\mathcal{C}_i^p|=m!/((p-1)(m-pi)!p^ii!)$. Moreover, if $C\in \mathcal{C}_i^p$, then $\fix_\Delta(C)=m-pi$,  and hence $\fix_\Omega(g)={m-pi\choose 2}$ when $p>2$ and $\fix_\Omega(g)={m-2i\choose 2}+i$ when $p=2$. Hence
\[
\mathrm{orb}_\Omega(C)=
\begin{cases}
{\frac{1}{p}\left({m\choose 2}-{m-pi\choose 2}\right)+{m-pi\choose 2}}&\textrm{when }p>2,\\
{\frac{1}{2}\left({m\choose 2}-{m-2i\choose 2}-i\right)+{m-2i\choose 2}+i}&\textrm{when }p=2.\\
\end{cases}
\]
Now, an easy computation gives $G_i^p(m)=\sum_{C\in \mathcal{C}_i^p}2^{\orb_\Omega(C)}$, see~\eqref{eq:def6}.

Write $\varepsilon=0.4$. Observe that $|\mathcal{C}'|\leq{m\choose c}(m-c)!=m!/c!<m^{m-c}=2^{(m-c)\log_2(m)}$ and that $|\mathcal{C}''|<m!<(\varepsilon \cdot m)^m$ for $m\geq 32$. 

Let $C$ be in $\mathcal{C}'$ and write $C=\langle g\rangle$. As $C\notin \mathcal{C}_i^p$ for every $p$ and $i$, we get $\fix_\Delta(g)\leq m-11$ and hence $\fix_\Omega(g)\leq {m-11\choose 2}$ if $|g|>2$. If $|g|=2$, then $\fix_\Delta(g)\leq m-12$ and hence $\fix_\Omega(g)\leq {m-12\choose 2}+6$. Now a computation shows that ${m-11\choose 2}\geq {m-12\choose 2}+6$ for $m\geq 18$. In particular, since we are assuming $m\geq 32$, in both cases $\fix_\Omega(C)\leq {m-11\choose 2}$. It follows that
$$\sum_{C\in\mathcal{C}'}2^{\orb_\Omega(C)}\leq 2^{\frac{1}{2}{m\choose 2}+\frac{1}{2}{m-11\choose 2}+(m-c)\log_2(m)}=2^{{m\choose 2}-\frac{11}{2}m+33+(m-c)\log_2(m)}.$$

Finally, let $C\in\mathcal{C}''$. Then $\fix_\Delta(C)\leq c-1$ and hence $\fix_\Omega(C)\leq{c-1\choose 2}+(m-c+1)/2$. It follows that
$$\sum_{C\in\mathcal{C}''}2^{\orb_\Omega(C)}\leq 2^{\frac{1}{2}{m\choose 2}+\frac{1}{2}\left({c-1\choose 2}+\frac{m-c+1}{2}\right)+m(\log_2(m)+\log_2(\varepsilon))}.$$

Now the lemma follows from the definition of $G(m)$ by taking $c=\lfloor m-2\log_2(m)\rfloor$ and by a careful computation.
\end{proof}


\section{Primitive groups of HC, CD and TW type}\label{HCCDPA}
In this section we prove Theorem~\ref{thrm:main} when $G$ is a primitive group on $\Omega$ of HC, CD or TW type. This case is already more complicated than the case discussed in Section~\ref{SD}, and presents all the main difficulties (but not the technicalities) of the remaining cases. We start by describing the structure and the action of the groups in these families: this will also  set the notation in the proof of Theorem~\ref{typeHS-SD-TW}.

 Assume that  $G$ is primitive of HC type (respectively, CD type) and let $N$ be the socle of $G$.  Then $G\leq H\wr L$ for some primitive group $H$  on $\Delta$ of HS type (respectively, SD type) and some transitive group $L$ of degree $\ell$. Moreover, the action of $G$ on $\Omega$ is equivalent to the product action of $G$ on $\Delta^\ell$.  Here $N$ equals the socle of $H\wr L$, the socle of $H$ is isomorphic to $T^a$ and $N$ is isomorphic to $T^{a\ell}$, for some non-abelian simple group $T$ and some positive integer $a\geq 2$ ($a=2$ when $H$ is of HS type). Finally, $|\Delta|=|T|^{a-1}$ and $|\Omega|=|T|^{(a-1)\ell}$.

Assume that  $G$ is primitive of TW type and let $N$ be the socle of $G$.  Then $G=N\rtimes L$ for some transitive group $L$ of degree $\ell$, and $N\cong T^\ell$ for some non-abelian simple group $T$  and some $\ell\geq 6$. The action of $G$ on $\Omega$ is equivalent to the natural ``affine'' action  of $G$ on $N$: the group $N$ acts on the set $N$ by right multiplication and $L$ acts by conjugation. Thus $|\Omega|=|T|^\ell$.

From~\cite{Pra}, we get that $\mathcal{M}(G)=\mathcal{M}_{\mathrm{PA}}(G)$ and the elements of $\mathcal{M}(G)$ permutation isomorphic to $\Sym(|\Omega|^{1/\ell'})\wr \Sym(\ell')$ (for some divisor $\ell'$ of $\ell$ with $\ell'>1$) are in one-to-one correspondence with the systems of imprimitivity of $L$ with $\ell'$ blocks of size $\ell/\ell'$. (This correspondence is natural: if $L$ has a system of imprimitivity with $\ell'$ blocks then the inclusion of $L$ in $\Sym(\ell/\ell')\wr\Sym(\ell')$ gives rise to  a natural inclusion of $G$ in $(H\wr \Sym(\ell/\ell'))\wr\Sym(\ell')\leq \Sym(|\Delta|^{\ell/\ell'})\wr\Sym(\ell')$.)  
Therefore, by Lemma~\ref{basic0}, we have 
\begin{equation}\label{eqTw}
|\mathcal{M}(G)|\leq \ell^{\log_2(\ell)}.
\end{equation}

\begin{theorem}\label{typeHS-SD-TW}
Let $G$ be a finite primitive group on $\Omega$ of HC, CD or TW type. Then there exists an edge-transitive hypergraph $\mathcal{H}=(\Omega,\mathcal{E})$ with $G=\Aut(\mathcal{H})$.
\end{theorem}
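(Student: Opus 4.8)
The plan is to reduce Theorem~\ref{typeHS-SD-TW} to Lemma~\ref{basic-2} by showing that $\mathcal{S}(G)\subsetneq 2^\Omega$, that is, that there exists a subset $\Delta$ of $\Omega$ fixed by no non-identity element of any maximal overgroup of $G$. Since for groups of HC, CD and TW type we have $\mathcal{M}(G)=\mathcal{M}_{\mathrm{PA}}(G)$, every $M\in\mathcal{M}(G)$ is permutation isomorphic to a wreath product $\Sym(m)\wr\Sym(\ell')$ acting on $\Omega=(\Omega')^{\ell'}$ in product action, where $m=|\Omega|^{1/\ell'}$ and $\ell'>1$ divides $\ell$. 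The key counting tool is Lemma~\ref{basic2}, which bounds $|\mathcal{F}(M)|$ by $F(n)$ whenever $\ell'\geq 3$, or $\ell'=2$ with $m\geq 36$; combined with the bound $|\mathcal{M}(G)|\leq \ell^{\log_2(\ell)}$ from Eq.~\eqref{eqTw}, this will let me estimate $|\mathcal{S}(G)|$.

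**The main estimate.**

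First I would write $|\mathcal{S}(G)|\leq\sum_{M\in\mathcal{M}(G)}|\mathcal{F}(M)|\leq |\mathcal{M}(G)|\cdot\max_M|\mathcal{F}(M)|$. In all three types the socle is $T^{a\ell}$ (or $T^\ell$) for a non-abelian simple group $T$, so $|\Omega|=|T|^{(a-1)\ell}$ (respectively $|T|^\ell$) grows at least like $60^\ell$, forcing $n=|\Omega|$ to be very large relative to $\ell$. The heart of the argument is to verify that $\ell^{\log_2(\ell)}\cdot F(n)<2^n$ in the relevant ranges, so that $|\mathcal{S}(G)|<2^n=|2^\Omega|$ and a regular subset $\Delta$ must exist. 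Inspecting the definition of $F(n)$ in Eq.~\eqref{eq000}, the dominant contributions $F_0(n)$, $F'(n)$, $F''(n)$ and each $F_{i,j}^p(n)$ all carry a factor $2^n$ multiplied by a term that decays like $2^{-c\sqrt{n}}$ for some $c>0$ (coming from the $-m^{\ell-1}$ or $-(i+j)(p-1)\sqrt{n}$ exponents), so $F(n)/2^n\to 0$ super-polynomially in $\sqrt{n}$. Since $\log_2|\mathcal{M}(G)|\leq(\log_2\ell)^2$ and $\ell\leq\log_{60}n$, the factor $|\mathcal{M}(G)|$ is utterly negligible against the $2^{-c\sqrt n}$ gain, and the inequality holds once $n$ exceeds an explicit bound.

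**Handling the small cases.**

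The main obstacle is exactly the boundary: Lemma~\ref{basic2} fails to apply when $\ell'=2$ and $m<36$, so I must separately control the finitely many primitive groups of HC, CD, TW type whose associated Cartesian decompositions have a factor of degree below $36$ with $\ell'=2$. The key point is that $m=|T|^{a-1}\geq 60$ already for HC and SD-based CD groups (since $|T|\geq 60$ and $a\geq 2$), and for TW type $m=|T|^{\ell/\ell'}\geq 60$ as well; thus the problematic regime $\ell'=2,\ m<36$ simply does not arise, because every factor in a genuine product-action decomposition of one of these groups has size at least $60$. I would make this observation precise by noting that the block size $m$ in any $M\in\mathcal{M}_{\mathrm{PA}}(G)$ equals $|\Omega|^{1/\ell'}=|T|^{(a-1)\ell/\ell'}$, which is a power of $|T|\geq 60$, hence always $\geq 60>36$. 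This removes the $\ell'=2$ exception entirely, so Lemma~\ref{basic2} applies to every maximal overgroup.

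**Assembling the proof.**

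Putting these together, for every $M\in\mathcal{M}(G)$ the block size satisfies $m\geq 60$, so Lemma~\ref{basic2} gives $|\mathcal{F}(M)|\leq F(n)$; combined with $|\mathcal{M}(G)|\leq\ell^{\log_2\ell}$ and the estimate $\ell^{\log_2\ell}F(n)<2^n$ (verified by the explicit function analysis above, using $n\geq 60^2$), I conclude $|\mathcal{S}(G)|<2^n$. Hence $\mathcal{S}(G)\subsetneq 2^\Omega$, and Lemma~\ref{basic-2} produces the desired edge-transitive hypergraph $\mathcal{H}=(\Omega,\mathcal{E})$ with $G=\Aut(\mathcal{H})$. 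I expect the delicate part to be the final numerical inequality $\ell^{\log_2\ell}F(n)<2^n$: because $\ell$ and $n$ are linked only through $n\geq 60^\ell$, one must check that the tiny number of summands in $F(n)$ (finitely many triples $(p,i,j)$ with $0<p(i+j)<11$) each stay below $2^n/(2\,\ell^{\log_2\ell})$, which is a finite but careful computation best discharged on a computer as indicated in the paper's computational remarks.
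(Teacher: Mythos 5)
Your proposal is correct and takes essentially the same route as the paper's proof: both reduce to Lemma~\ref{basic-2} via the estimate $|\mathcal{S}(G)|\leq|\mathcal{M}(G)|\,F(n)\leq\ell^{\log_2(\ell)}F(n)$ coming from Eq.~\eqref{eqTw} and Lemma~\ref{basic2}, together with the observation that every block size $n^{1/\ell'}$ is a power of $|T|\geq 60>36$, so the exceptional regime $\ell'=2$, $m<36$ of Lemma~\ref{basic2} never arises. Your final numerical verification that $\ell^{\log_2(\ell)}F(n)<2^n$ for $n\geq 60^2$ (with $\ell\leq\log_{60}(n)$) is exactly the computation the paper performs.
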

\begin{proof}
Write $n=|\Omega|$ and observe that $n\geq |T|^2\geq 60^2$. We use the notation that we established above. 
Clearly, $\ell\leq \log_{60}(n)$ and $n^{1/\ell'}\geq \sqrt{n}\geq |T|\geq 60>36$, for every divisor $\ell'$ of $\ell$ with $\ell'>1$. Therefore from Eq.~\eqref{eqTw} and Lemma~\ref{basic2}, we have
$$|\mathcal{S}(G)|\leq \sum_{M\in\mathcal{M}(G)}|\mathcal{F}(M)|\leq |\mathcal{M}(G)|F(n)\leq  \log_{60}(n)^{\log_2(\log_{60}(n))}F(n).$$
A computation shows that, for $n\geq 60^2$, the right hand side is strictly smaller than $2^n$ and hence $\mathcal{S}(G)\subsetneq 2^\Omega$. Now the proof follows from Lemma~\ref{basic-2}.
\end{proof}




\section{Primitive groups of AS type}\label{AStype}
In this section we prove Theorem~\ref{thrm:main} when $G$ is a finite primitive group on $\Omega$ of AS type. For this proof we use~\cite[Theorem~A]{Asch}. 

\begin{definition}\label{def1}{\rm
Following~\cite{Asch} (and also~\cite{LPS2,Pra}), we say that $G$ is \textit{product decomposable} if there exists a finite set $\Delta$, a positive integer $\ell$ with $\ell\geq 2$, and a subgroup $R$ of $H\wr \Sym(\ell)$ (endowed of its natural product action on $\Delta^\ell$) such that $G$ is permutation isomorphic to $R$. Now,~\cite[Theorem, part~II]{LPS1} says that, if $G$ is  product decomposable, then one of the following happens:
\begin{description}
\item[(i)]$\ell=2$, $\soc(G)=\Alt(6)$, $|\Delta|=6$, $|\Omega|=36$ and $G$ contains an outer-automorphism of $\Sym(6)$,
\item[(ii)]$\ell=2$, $G=\Aut(M_{12})$, $|\Delta|=12$ and $|\Omega|=144$,
\item[(iii)]$\ell=2$, $\soc(G)=\Sp_4(q)$, $q=2^k$ for some positive integer $k\geq 2$, $|\Delta|=q^2(q^2-1)/2$, $|\Omega|=q^4(q^2-1)^2/4$ and $G$ contains a graph automorphism of $\Sp_4(q)$.
\end{description} 
The group $G$ is \textit{product indecomposable} if it is not product decomposable. 
}
\end{definition}
\begin{theorem}\label{AS}
Let $G$ be a finite primitive group on $\Omega$ of AS type. Then either there exists an edge-transitive hypergraph $\mathcal{H}=(\Omega,\mathcal{E})$ with $G=\Aut(\mathcal{H})$, or $G$ is one of the groups in Table~$\ref{table1}$.
\end{theorem}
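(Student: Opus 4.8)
The plan is to follow the same template that succeeded for the HS/SD and HC/CD/TW types: reduce to showing that $\mathcal{S}(G)\subsetneq 2^\Omega$ via Lemma~\ref{basic-2}, and handle the residual small-degree cases with the library of primitive groups in \texttt{magma}. The key point is that for a primitive group $G$ of AS type, \cite[Theorem~A]{Asch} (together with \cite{LPS1,Pra}) gives a very restricted description of $\mathcal{M}(G)$: the maximal overgroups are themselves of AS, PA, or (in degenerate cases) other types, and crucially the PA-type overgroups correspond to product decompositions of $\Omega$. Since $G$ is almost simple with socle $T$, a product decomposition $\Omega=\Delta^\ell$ forces $G$ to be product decomposable, and Definition~\ref{def1} then pins this down to the three explicit families (i)--(iii). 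So the first step is to split into two cases according to whether $G$ is product indecomposable or product decomposable.

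First I would treat the \emph{product indecomposable} case. Here $\mathcal{M}_{\mathrm{PA}}(G)=\emptyset$ by Lemma~\ref{CDlemma} (a $G$-invariant Cartesian decomposition with $\ell\geq 2$ would exhibit $G$ as product decomposable), and by O'Nan--Scott the only remaining possibilities for elements of $\mathcal{M}(G)$ are of AS type. The inclusion results of \cite{Asch,LPS1} bound the number and structure of such AS-type maximal overgroups: typically $|\mathcal{M}(G)|$ is small, and each $M\in\mathcal{M}(G)$ is almost simple of bounded order. I would then bound $|\mathcal{S}(G)|\leq\sum_{M\in\mathcal{M}(G)}|\mathcal{F}(M)|$ using Lemma~\ref{fact3}: each $M$ is almost simple, so its non-identity elements of prime order have a controlled number of cycles $\orb_\Omega(C)$, and the fixed-point-ratio estimates for almost simple primitive groups (the probabilistic machinery invoked in \cite{BC,CNS,Seress}) give $\fix_\Omega(C)$ much smaller than $|\Omega|$ for all but finitely many degrees. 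The upshot should be $|\mathcal{S}(G)|<2^{|\Omega|}$ once $|\Omega|$ exceeds an explicit bound, after which Lemma~\ref{basic-2} applies. The remaining finitely many small-degree groups are checked directly in \texttt{magma}, yielding the AS-type entries of Table~\ref{table1} (e.g.\ $\PSL_2(5)$, $\PSL_2(7)$, $\PSL_2(8)$, $\mathrm{P}\Gamma\mathrm{L}_2(8)$, $\PSL_2(9)$, $\PGL_2(9)$, and the set-transitive exceptions).

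Second I would treat the \emph{product decomposable} case, which is the genuinely finite list (i)--(iii) of Definition~\ref{def1}. Cases (i) and (ii) have fixed degrees $36$ and $144$, so a direct computation (or the bound of Lemma~\ref{basic22}/Lemma~\ref{basic2} at $m=6,12$) settles them. Case (iii), with $\soc(G)=\Sp_4(q)$ and $|\Omega|=q^4(q^2-1)^2/4$ for $q=2^k$, is an infinite family and requires an estimate: here $\mathcal{M}_{\mathrm{PA}}(G)$ is nonempty, with overgroup $\Sym(m)\wr\Sym(2)$ where $m=q^2(q^2-1)/2$, so I would bound $|\mathcal{F}(M)|$ for this single PA-type overgroup using Lemma~\ref{basic2} (valid since $\ell=2$ and $m\geq 36$ for $q\geq 4$), combine it with the bounds for the AS-type overgroups, and verify $|\mathcal{S}(G)|<2^{|\Omega|}$ for $q$ large, reducing to finitely many small $q$ done on the computer.

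The main obstacle I expect is not the estimation itself but the bookkeeping of $\mathcal{M}(G)$ in the product indecomposable case: \cite[Theorem~A]{Asch} describes the overgroup lattice through a list of configurations, and one must verify that in each configuration the maximal overgroups are few enough and have fixed-point ratios small enough that the sum $\sum_M 2^{\orb_\Omega(C)}$ stays below $2^{|\Omega|}$. The delicate sub-case is when $G$ sits inside several almost simple maximal subgroups of comparable order (so that $|\mathcal{M}(G)|$ is not bounded by a constant); there one needs the sharper fixed-point-ratio bounds from the theory of almost simple primitive groups to control $\fix_\Omega(C)$ uniformly. Once the degree is large the exponential gap is enormous, so the real work is identifying the precise threshold degree below which the \texttt{magma} computation must take over, and confirming that the only surviving exceptions are exactly those recorded in Table~\ref{table1}.
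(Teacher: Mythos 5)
Your overall architecture (the split via Definition~\ref{def1}, bounding $|\mathcal{S}(G)|\leq\sum_{M\in\mathcal{M}(G)}|\mathcal{F}(M)|$, concluding by Lemma~\ref{basic-2}, and deferring small degrees to \texttt{magma}) is the same as the paper's, but your product indecomposable branch has a genuine gap. Aschbacher's Theorem~A does reduce this branch to a short list of configurations with $|\mathcal{M}(G)|\leq 3$, but in two of them the maximal overgroups are symmetric groups acting on $2$-subsets: for $(T,n)=(HS,15400)$ one gets two overgroups isomorphic to $\Sym(176)$ on the $2$-subsets of $176$ points, and for the \emph{infinite} Suzuki family $T\cong\Sz(q)$, $n=q^2(q^2+1)/2$, one overgroup is $\Sym(q^2+1)$ on the $2$-subsets of $\{1,\ldots,q^2+1\}$. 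There your key claim --- that fixed-point-ratio estimates for almost simple primitive groups give $\fix_\Omega(C)$ much smaller than $|\Omega|$ --- is simply false: a transposition in $\Sym(m)$ fixes ${m-2\choose 2}+1=n-2(m-2)$ of the $n={m\choose 2}$ pairs, so the ratio tends to $1$. Your proposed uniform bound $|\mathcal{F}(M)|\leq|M|\cdot 2^{(n+\max\fix_\Omega)/2}$ then yields roughly $m!\cdot 2^{\,n-(m-2)}$, which exceeds $2^n$ because $m!\gg 2^{m}$; and since $q$ is unbounded in the Suzuki case you cannot sweep this into the computer check. The paper's remedy is precisely Lemma~\ref{basic22} (the function $G(m)$), which partitions $\mathcal{C}(\Sym(m))$ by cycle type so that elements with large fixed-point sets are shown to be few while the numerous elements have small $\orb_\Omega$; you cite Lemma~\ref{basic22} only parenthetically for the fixed-degree decomposable cases (i)--(ii), where it is unnecessary, and omit it exactly where it is indispensable. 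Relatedly, the ``delicate sub-case'' you anticipate (unboundedly many comparable AS overgroups) never occurs by~\cite[Theorem~A]{Asch}: the real difficulty is not the number of overgroups but their nature.

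Two secondary points. In the decomposable case (iii) you tacitly assume the PA overgroup $\Sym(m)\wr\Sym(2)$ is unique; the paper proves $|\mathcal{M}_{\mathrm{PA}}(G)|=1$ by a nontrivial analysis of the subgroups of index $m$ in $\Sp_4(q)$ (the two conjugacy classes of $\mathrm{O}_4^-(q)$ and how $\Aut(T)$ permutes them). This particular gap is repairable from the paper's own toolkit: Lemma~\ref{CartProduc} gives $|\mathcal{M}_{\mathrm{PA}}(G)|\leq n^{\log_2(n)/2}$, and since here $n\geq 14400$ the cruder estimate $2^{\frac{n}{2}+\frac{2n}{7}}|K|+n^{\log_2(n)/2}F(n)<2^n$ still closes the case --- but as written your single-term estimate presumes what must be proved. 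Finally, your blanket O'Nan--Scott claim that in the indecomposable case all maximal overgroups are of AS type fails at $(T,n)=(\PSL_2(7),8)$, where $\AGL_3(2)$ is an HA-type overgroup; the paper removes this case by direct computation \emph{before} invoking~\cite[Theorem~A]{Asch}, and you should do the same. (Also, Table~\ref{table1} contains $\PGL_2(5)$ in degree $6$, not $\PSL_2(5)$.)
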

\begin{proof}
Let $T$ be the socle of $G$ and let $n$ be the degree of $G$. Then $T\unlhd G\leq \Aut(T)$.

If $T=\PSL_2(7)$ and $n=8$, then the proof follows with a computation. In particular, in what follows we may assume that $(T,n)\neq (\PSL_2(7),8)$. We start by dealing with the case that $G$ is product indecomposable. From~\cite[Theorem~A]{Asch}, we see that one of the following happens:
\begin{description}
\item[(1)]$|\mathcal{M}(G)|=1$;
\item[(2)]$G=T$, $|\mathcal{M} (T)| = 3$, $\Aut(T)\cong
\norm{\Sym(\Omega)}T\in \mathcal{M}(T)$, $\norm {\Sym(\Omega)}T$ is transitive on $\mathcal{M} (T)\setminus\{\norm {\Sym(\Omega)} T\}$, and $T$ is
maximal in $V$, where $K \in\mathcal{M} (T) \setminus \{\norm{\Sym(\Omega)} T\}$ and $V =\soc (K)$. Further $(T, V ,n)$ is one of the following:
\begin{description}
\item[(a)] $(HS, \Alt(m), 15400)$, where $m= 176$ and $n ={m\choose 2}$,
\item[(b)] $(\mathrm{G}_2(3),\mathrm{P}\Omega_7(3), 3159)$,
\item[(c)] $(\PSL_2(q),M_n,n)$, where $q \in \{11, 23\}$, $n =q +1$, and $M_n$ is the Mathieu group of degree $n$,
\item[(d)] $(\PSL_2(17), \Sp_8(2), 136)$;
\end{description}
\item[(3)] $T\cong \PSL_3(4)$ and $n=280$;
\item[(4)] $T\cong\Sz(q)$, $q = 2^k$, $k\geq 3$ is odd, $n = q^2(q^2 + 1)/2$, $\mathcal{M} (T) = \{K_1, K_2\}$ where $K_i = \norm {\Sym(\Omega)}{V_i}\cong \Aut(V_i)$, $V_1\cong \Alt(q^2+1)$ and the action of $V_1$ on $\Omega$ is equivalent to the action of $\Alt(q^2+1)$ on the $2$-subsets of $\{1,\ldots,q^2+1\}$, $V_2=\Sp_{4k}(2)$, and $\norm {\Sym(\Omega)} T\cong \Aut(T)$ is maximal in $V_1$;
\item[(5)] $G=\PSL_2(11)$ and $n = 55$.
\end{description} 
We deal with each of these possibilities in a case-by-case basis. If Case~(1) holds, then the proof follows from Lemma~\ref{basic}. If Case~(2c),~(2d),~(3) or~(5) holds, then the proof follows with a computation with \texttt{magma}. 

Suppose that Case~(2b) holds. Then $\mathcal{M}(G)=\{K_0,K_1,K_2\}$, with $K_0\cong \Aut(\mathrm{G}_2(3))$ and $K_1\cong K_2\cong \Aut(\mathrm{P}\Omega_7(3))$.  From~\cite[Corollary~$1$]{GM}, we get that $\fix_\Omega(g)\leq 4n/7$ for every non-identity element $g$ in $K_i$, for $i\in \{1,2,3\}$. It follows from Lemma~\ref{fact3} that
$$|\mathcal{S}(G)|\leq |\mathcal{F}(K_0)|+|\mathcal{F}(K_1)|+|\mathcal{F}(K_2)|\leq(|K_0|+|K_1|+|K_2|)\cdot 2^{\frac{n}{2}+\frac{2n}{7}}.$$ A computation shows that $|\mathcal{S}(G)|<2^n$ and hence $\mathcal{S}(G)\subsetneq 2^\Omega$. Thus the proof follows from Lemma~\ref{basic-2}.

Suppose that Case~(2a) holds. Then $\mathcal{M}(G)=\{K_0,K_1,K_2\}$, with $K_0=\norm{\Sym(\Omega)}G\cong \Aut(HS)$ and $K_1\cong K_2\cong \Sym(176)$.  From~\cite[Corollary~$1$]{GM}, we get that $\fix_\Omega(g)\leq 4n/7$ for every  $g\in K_0\setminus\{1\}$. Therefore, from Lemmas~\ref{fact3} and~\ref{basic22}, we get $|\mathcal{S}(G)|\leq 2^{\frac{n}{2}+\frac{2n}{7}}\cdot |K_0|+2\cdot G(176)$. Now a computation shows that $|\mathcal{S}(G)|<2^n$, hence $\mathcal{S}(G)\subsetneq 2^\Omega$ and we conclude using Lemma~\ref{basic-2}.

Suppose that Case~(4) holds. Then $\mathcal{M}(G)=\{K_1,K_2\}$, with $K_1\cong \Sym(q^2+1)$, $K_2\cong \Sp_{4k}(2)$ and the action of $K_1$ on $\Omega$ is equivalent to the action of $\Sym(q^2+1)$ on the $2$-subsets of $\{1,\ldots,q^2+1\}$.   From~\cite[Corollary~$1$]{GM}, we get that $\fix_\Omega(g)\leq 4n/7$ for every  $g\in K_2\setminus\{1\}$. Therefore, from Lemmas~\ref{fact3} and~\ref{basic22}, we get $|\mathcal{S}(G)|\leq  G(q^2+1)+2^{\frac{n}{2}+\frac{2n}{7}}|K_2|.$ Using $q^2+1\geq 65$, a computation shows that $|\mathcal{S}(G)|<2^n$, hence $\mathcal{S}(G)\subsetneq 2^\Omega$ and we conclude using Lemma~\ref{basic-2}.

\smallskip

Suppose that $G$ is product decomposable. If $T=\Alt(6)$ or $T=M_{12}$, then the proof follows with a computation in \texttt{magma}. Therefore we are left with $T=\Sp_4(q)$, $q=2^k$, $k\geq 2$, $n=(q^2(q^2-1)/2)^2$ and $G$ is contained in a wreath product $\Sym(m)\wr\Sym(2)$ with $m=q^2(q^2-1)/2$. Here we refer to~\cite[Section~$4$]{LPS1} for the information on this permutation representation. 
From~\cite[Theorem and Tables~III,~IV,~V]{LPS1}, we deduce that $\mathcal{M}_{\mathrm{HA}}(G)=\emptyset$ and $\mathcal{M}_{\mathrm{AS}}(G)=\{K\}$ where $K=\norm{\Sym(\Omega)} T\cong \Aut(T)$.
 We claim that there exists a unique $G$-invariant Cartesian decomposition of $\Omega$ and hence $|\mathcal{M}_{\mathrm{PA}}(G)|=1$ by Lemma~\ref{CDlemma}. Let $\Omega=\Delta_1\times \Delta_2$ be a $G$-invariant Cartesian decomposition. The group $T=\soc(G)$ is imprimitive and has two systems of imprimitivity with $m$ blocks of size $m$ (namely, $\{\Delta_1\times\{\delta\}\mid \delta\in \Delta_2\}$ 
 and $\{\{\delta\}\times \Delta_2\mid\delta\in\Delta_1\}$), which are interchanged by  $G$. For $\omega=(\delta_1,\delta_2)\in \Omega$, from~\cite[Section~$4$]{LPS1}, we see that $T_\omega\cong C_{q^2+1}\rtimes C_4$ is the normaliser of a torus of order $q^2+1$. Moreover, $T_{\Delta_1\times \{\delta_2\}}\cong T_{\{\delta_1\}\times\Delta_2}\cong \mathrm{O}_4^-(q)\cong \mathrm{SL}_2(q^2).2\cong \Sp_2(4).2$ and $T_{\Delta_1\times\{\delta_2\}}$, $T_{\{\delta_1\}\times\Delta_2}$ are maximal subgroups of $T$.  From~\cite[Section~$4.8$]{KL} or the discussion in~\cite[Section~4]{LPS1}, we see that $T$ has exactly two conjugacy classes of subgroups isomorphic to $\mathrm{O}_4^-(q)$: one conjugacy class with representative the stabiliser of a quadratic form for the underlying vector space of $T$ and one with representative the stabiliser of an extension field. These two classes are fixed by the subgroup of index $2$ of $\Aut(T)$ consisting of the inner-diagonal and field automorphisms, and are fused by the remaining elements. Furthermore, from the list of maximal subgroups of $\Sp_4(q)$ in~\cite{kleidman} (see also~\cite{BCH}), we see that if $U$ is any subgroup of $T$ with $|T:U|=m$, then $U$ is maximal in $T$ and is conjugate to either $T_{\Delta_1\times\{\delta_2\}}$ or  $T_{\{\delta_1\}\times\Delta_2}$. Therefore $\{\Delta_1\times\{\delta\}\mid \delta\in \Delta_2\}$ 
 and $\{\{\delta\}\times \Delta_2\mid\delta\in\Delta_1\}$ are the only systems of imprimitivity of $T$ with $m$ blocks of size $m$ and our claim follows.  
 Therefore $\mathcal{M}(G)=\{K,M\}$, where $M\cong\Sym(m)\wr\Sym(2)$.  From~\cite[Corollary~$1$]{GM}, we have $\fix_\Omega(g)\leq 4n/7$ for every $g\in K$ with $g\neq 1$. Thus
$$|\mathcal{S}(G)|\leq |\mathcal{F}(K)|+|\mathcal{F}(M)|\leq  2^{\frac{n}{2}+\frac{2n}{7}}|K|+F(n).$$
A computation shows that $|\mathcal{S}(G)|<2^n$, hence $\mathcal{S}(G)\subsetneq 2^\Omega$ and we conclude using Lemma~\ref{basic-2}.
\end{proof}


\section{Primitive groups of HA type}\label{HAtype}

In this section we prove Theorem~\ref{thrm:main} when $G$ is a primitive group on $\Omega$ of HA type.  We start with a number-theoretic remark.
\begin{lemma}\label{nt}Let $p$ be a prime number. Then there are at most $(p-1)/2$ solutions to the equation $p=(q^\ell-1)/(q-1)$ where $\ell$ is a positive integer with $\ell\geq 2$ and $q$ is a prime power.
\end{lemma}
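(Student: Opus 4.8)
The plan is to rewrite the equation in the form
\[
p=\frac{q^\ell-1}{q-1}=1+q+q^2+\cdots+q^{\ell-1},
\]
and then to count the admissible pairs $(q,\ell)$ by exploiting the monotonicity of the right-hand side. First I would observe that, for a fixed prime power $q\ge 2$, the quantity $1+q+\cdots+q^{\ell-1}$ is strictly increasing in $\ell$; hence for each $q$ there is at most one exponent $\ell\ge 2$ with $p=(q^\ell-1)/(q-1)$. Consequently the map sending a solution $(q,\ell)$ to its base $q$ is injective, and it suffices to bound the number of prime powers $q$ that can occur.

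Next I would split the count according to the size of $\ell$. If $\ell=2$, then $p=1+q$, so $q=p-1$ is forced and there is at most one such solution. If instead $\ell\ge 3$, then $p\ge 1+q+q^2>q^2$, whence $q\le\sqrt{p-1}$; since $q\ge 2$, this leaves at most $\lfloor\sqrt{p-1}\rfloor-1$ possible bases, each contributing at most one solution. Adding the single possible contribution from $\ell=2$ — whose base $q=p-1$ exceeds $\sqrt{p-1}$ as soon as $p\ge 5$, so that no double counting occurs — the total number of solutions is at most $\lfloor\sqrt{p-1}\rfloor\le\sqrt{p-1}$.

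Finally I would close the estimate with the elementary inequality $\sqrt{p-1}\le (p-1)/2$, which is equivalent to $p-1\ge 4$ and therefore holds for every prime $p\ge 5$. The remaining primes $p\in\{2,3\}$ I would dispose of by hand: for $p=2$ the equation has no solution, since its right-hand side is at least $q+1\ge 3$, while for $p=3$ the only solution is $(q,\ell)=(2,2)$; in both cases the bound $(p-1)/2$ is satisfied.

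The calculation is entirely routine, and I expect no serious obstacle: the whole argument rests on the monotonicity of $1+q+\cdots+q^{\ell-1}$ in $\ell$ together with the quadratic lower bound $p>q^2$ forced when $\ell\ge 3$. The only points requiring a little care are verifying that the $\ell=2$ base $q=p-1$ does not coincide with any $\ell\ge 3$ base (so the two counts genuinely add rather than overlap) and checking the small primes separately, where the asymptotically very wasteful bound $(p-1)/2$ must still be confirmed to hold.
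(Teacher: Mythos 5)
Your argument is correct, but it parametrizes the solutions the opposite way from the paper. The paper fixes the exponent $\ell$ and notes that $(q^\ell-1)/(q-1)$ is strictly increasing in $q$, so each $\ell$ admits at most one base; it then constrains $\ell$ arithmetically: if $1+q+\cdots+q^{\ell-1}$ is prime then $\ell$ must itself be prime (otherwise $(q^a-1)/(q-1)$ for a proper divisor $a$ of $\ell$ would be a nontrivial proper factor), and $\ell$ divides $p-1$ (the order of $q$ modulo $p$ is exactly $\ell$). Hence in the paper the solutions inject into the set of prime divisors of $p-1$, of which there are at most $\log_2(p-1)\le (p-1)/2$. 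You instead fix the base $q$ and use monotonicity in $\ell$, so solutions inject into bases; your split between $\ell=2$ (forcing $q=p-1$) and $\ell\ge 3$ (forcing $q\le\sqrt{p-1}$) yields at most $\lfloor\sqrt{p-1}\rfloor$ solutions, and $\sqrt{p-1}\le(p-1)/2$ for $p\ge 5$, with $p\in\{2,3\}$ rightly checked by hand. Your route is more elementary, avoiding all modular arithmetic and the divisibility structure of repunits, at the price of a weaker count, $\sqrt{p-1}$ versus the paper's implicit bound by the number of distinct prime divisors of $p-1$; since the lemma only demands $(p-1)/2$ (and the author explicitly remarks it is far from best possible), either overshoot is harmless. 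One small simplification: because your base map is injective, the worry about double counting between the $\ell=2$ base and the $\ell\ge 3$ bases is unnecessary for an upper bound — each base contributes at most one solution regardless — though your verification that $p-1>\sqrt{p-1}$ is of course correct.
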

\begin{proof}
It is clear that if $(q^\ell-1)/(q-1)=p=(q'^\ell-1)/(q'-1)$, then $q=q'$. Observe that if $(q^\ell-1)/(q-1)$ is prime, then $\ell$ is prime and $\ell$ divides $p-1$. Now the proof follows immediately.
\end{proof}
Lemma~\ref{nt} is far from best possible and should not be taken too seriously. Nevertheless, the equation $p=(q^\ell-1)(q-1)$ can have more than one solution. For instance, $(5^3-1)/(5-1)=31=(2^5-1)/(2-1)$.

\begin{theorem}\label{typeHA}
Let $G$ be a finite primitive group on $\Omega$ of HA type. Then either there exists an edge-transitive hypergraph $\mathcal{H}=(\Omega,\mathcal{E})$ with $G=\Aut(\mathcal{H})$, or $G$ is one of the groups in Table~$\ref{table1}$.
\end{theorem}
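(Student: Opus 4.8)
The plan is to follow the same template as in the proofs of Theorems~\ref{typeHS-SD} and~\ref{typeHS-SD-TW}: write $V=\soc(G)$, so that $V\cong\mathbb{F}_p^d$ for some prime $p$ and some $d\geq 1$; identify $\Omega$ with $V$, so that $G=V\rtimes G_0$ with $G_0\leq\GL_d(p)$ irreducible and $n=|\Omega|=p^d$. First I would dispose of the finitely many small degrees by direct computation with \texttt{magma}: this is exactly where the HA-type groups of Table~\ref{table1} (all of degree $5$, $7$, $8$ and $9$) arise, both the $1$-dimensional ones ($C_5$, $\AGL_1(5)$, $C_7$, $C_7\rtimes C_3$, $\AGL_1(8)$, $\mathrm{A}\Gamma\mathrm{L}_1(8)$, $\AGL_1(9)$) and the $2$-dimensional ones over $\mathbb{F}_3$ ($(C_3\times C_3)\rtimes C_4$, $(C_3\times C_3)\rtimes Q_8$, $\ASL_2(3)$). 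For the remaining, large values of $n$ the strategy is to prove $\mathcal{S}(G)\subsetneq 2^\Omega$ and to invoke Lemma~\ref{basic-2}; as in the previous sections this reduces to showing $\sum_{M\in\mathcal{M}(G)}|\mathcal{F}(M)|<2^n$.

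The first step is to describe $\mathcal{M}(G)$. Since $\soc(G)$ is elementary abelian, reading off the inclusions from~\cite{Asch1,Asch,LPS1,Pra} one sees that no maximal overgroup is of SD type, so $\mathcal{M}(G)=\mathcal{M}_{\mathrm{HA}}(G)\cup\mathcal{M}_{\mathrm{PA}}(G)\cup\mathcal{M}_{\mathrm{AS}}(G)$. The elements of $\mathcal{M}_{\mathrm{HA}}(G)$ are the maximal affine overgroups $V\rtimes H$ with $G_0\leq H\leq\GL_d(p)$, one for each maximal geometric (field, imprimitive, tensor, $\ldots$) configuration preserved by $G_0$; by Aschbacher's theorem these fall into boundedly many classes and their number is subexponential in $n$. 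The elements of $\mathcal{M}_{\mathrm{PA}}(G)$ correspond, via Lemma~\ref{CDlemma}, to the $G$-invariant Cartesian decompositions $\Omega=\Delta_1\times\cdots\times\Delta_\ell$; because $G$ contains all translations, these are precisely the $G_0$-invariant direct-sum decompositions $V=U_1\oplus\cdots\oplus U_\ell$ into equidimensional subspaces (so $m=p^{d/\ell}\geq 5$ and $\ell\mid d$), and I would bound their number in the spirit of Lemma~\ref{basic0}. Finally $\mathcal{M}_{\mathrm{AS}}(G)$ is non-empty only for a short explicit list of small cases, which can be enumerated from the classification.

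The second step is the fixed-point estimate. The key observation is that, since every $M\in\mathcal{M}_{\mathrm{HA}}(G)$ lies in the holomorph $\AGL_d(p)$ of $V$, each non-identity $g\in M$ acts as $x\mapsto Ax+b$ with $A\in\GL_d(p)$, so its fixed-point set is a coset of the proper subspace $\ker(A-I)$ (or is empty); hence $\fix_\Omega(g)\leq p^{d-1}=n/p$. Consequently Lemma~\ref{fact3} gives $|\mathcal{F}(M)|\leq|M|\,2^{n/2+n/(2p)}$, and as $|M|\leq|\AGL_d(p)|<p^{d^2+d}$ is quasi-polynomial in $n=p^d$, the total contribution of $\mathcal{M}_{\mathrm{HA}}(G)$ is at most $2^{3n/4+o(n)}$. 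For $M\in\mathcal{M}_{\mathrm{PA}}(G)$ I would apply Lemma~\ref{basic2} to obtain $|\mathcal{F}(M)|\leq F(n)$ (the hypotheses $m\geq 5$, and $\ell\geq 3$ or $m\geq 36$, being arranged by the large-$n$ reduction), and for $M\in\mathcal{M}_{\mathrm{AS}}(G)$ the bound $\fix_\Omega(g)\leq 4n/7$ of~\cite[Corollary~$1$]{GM} together with Lemma~\ref{fact3}. Combining these three estimates with the bounds on $|\mathcal{M}_{\mathrm{HA}}(G)|$ and $|\mathcal{M}_{\mathrm{PA}}(G)|$ should yield $\sum_{M}|\mathcal{F}(M)|<2^n$ for all sufficiently large $n$, and then Lemma~\ref{basic-2} finishes the generic range.

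The hard part will be the bookkeeping needed to guarantee $\sum_{M}|\mathcal{F}(M)|<2^n$ uniformly, and in particular to pin down the exact threshold below which the estimate fails. This is delicate in two ways: controlling the number of $G_0$-invariant direct-sum decompositions (hence $|\mathcal{M}_{\mathrm{PA}}(G)|$) without overcounting, so that the product $|\mathcal{M}_{\mathrm{PA}}(G)|\,F(n)$ stays below $2^n$; and handling the $1$-dimensional case $G\leq\mathrm{A}\Gamma\mathrm{L}_1(p^d)$, where the point stabiliser is solvable and the lattice of affine overgroups indexed by the divisors of $d$ and of $p^d-1$ is richest. I expect the borderline small values of $p$ and $d$ to survive the asymptotic estimate and to be settled only by explicit computation, and it is precisely this residual finite range that produces the entries of Table~\ref{table1} in degrees $5$, $7$, $8$ and $9$.
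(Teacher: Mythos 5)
Your overall template (bound $|\mathcal{S}(G)|\leq\sum_{M\in\mathcal{M}(G)}|\mathcal{F}(M)|<2^n$ via fixed-point estimates and Lemma~\ref{fact3}, invoke Lemma~\ref{basic-2}, and sweep the residual small degrees with \texttt{magma}) is indeed the paper's, and your treatment of the HA and PA contributions is essentially right: the bound $\fix_\Omega(g)\leq n/p$ for non-trivial affine maps is exactly what the paper uses, and the identification of $\mathcal{M}_{\mathrm{PA}}(G)$ with $G_0$-invariant direct-sum decompositions $V=V_1\times\cdots\times V_\ell$ matches the paper, which counts them by observing that each decomposition is determined by the $H$-orbit of one summand, giving at most $\frac{1}{\ell}$ times the Gaussian binomial coefficient counting $(d/\ell)$-dimensional subspaces, hence $|\mathcal{M}_{\mathrm{PA}}(G)|\leq n^{\log_2(n)/2}$. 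But your AS branch contains a genuine gap: it is false that $\mathcal{M}_{\mathrm{AS}}(G)$ is ``non-empty only for a short explicit list of small cases''. By \cite[Proposition~$5.1$]{Pra}, besides $|V|\in\{11,23,27\}$ there is an \emph{infinite} family: $n=p=(q^\ell-1)/(q-1)$ prime with $M\cong\PGammaL_\ell(q)$ acting $2$-transitively on the points or hyperplanes of projective space -- this occurs for every Mersenne and Fermat prime, among others, and is precisely where the degree-$5$ and degree-$7$ HA entries of Table~\ref{table1} (e.g.\ $C_7$ and $C_7\rtimes C_3$ inside $\PSL_3(2)$ on the Fano plane) live. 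Handling it requires work you have not supplied: a bound on the number of admissible pairs $(q,\ell)$ for a given $p$ (the paper's Lemma~\ref{nt} gives at most $(p-1)/2$, noting the same prime can arise twice, e.g.\ $31=(5^3-1)/4=(2^5-1)/1$); a bound on the number of $\Sym(\Omega)$-conjugates of each $M_i$ containing $G$ (the paper's Sylow argument: $V^{g^{-1}}$ is a Sylow $p$-subgroup of $M_i$, forcing $g\in M_iK$ and giving at most $(p-1)/(\ell_if_i)$ conjugates); and a fixed-point bound for $\PGammaL_\ell(q)$ on projective points, for which the paper uses Liebeck--Saxl \cite{LieSax} ($\fix_\Omega(g)\leq\sqrt{p}+1$ roughly) rather than the cruder $4n/7$ of \cite{GM}, to push the threshold down to $p\geq 139$ and leave only $p\in\{5,7,13,17,31,73,127\}$ for the computer. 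Without these three ingredients your ``short list'' premise collapses and the generic estimate does not cover these prime degrees.

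A secondary, conceptual error: your description of $\mathcal{M}_{\mathrm{HA}}(G)$ as a family of affine overgroups $V\rtimes H$, one per maximal Aschbacher-class configuration preserved by $G_0$, confuses maximality in $\AGL_d(p)$ with maximality in $\Sym(\Omega)$. Any such $V\rtimes H$ with $H<\GL_d(p)$ is contained in $\AGL_d(p)$ and so is never maximal in $\Sym(\Omega)$; by \cite{Pra} one has $\mathcal{M}_{\mathrm{HA}}(G)=\{\norm{\Sym(\Omega)}{V}\}\cong\{\AGL_d(p)\}$, a single group. Since your count was an overestimate this does not break the inequality, but the paper exploits the sharper fact in an essential way: when $\mathcal{M}(G)=\mathcal{M}_{\mathrm{HA}}(G)$ it has $|\mathcal{M}(G)|=1$ and can quote Lemma~\ref{basic} (hence Seress's classification \cite{Seress}) directly, which is how exceptions such as $\AGL_1(8)$ and $\mathrm{A}\Gamma\mathrm{L}_1(8)$ at degree $8$ (where no AS or PA overgroup exists) are caught. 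Finally, be aware that the finite range left over by the PA estimate is larger than you suggest: the paper's generic bound only works for $n\geq 10533$, and the degrees below that are eliminated by computing the exact value of $\sum_{C}2^{\orb_\Omega(C)}$ for each wreath product before the final direct checks at $n\in\{25,49,81,64,256\}$; your plan should anticipate a two-stage refinement of this kind rather than a single clean threshold.
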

\begin{proof}
Let $V$ be the socle of $G$ and let $H$ be the stabiliser of a point of $\Omega$. Then $V$ is an elementary abelian $p$ group of size $p^d$, for some prime number $p$ and some positive integer $d$. Moreover, $G=V\rtimes H$ and the action of $G$ on $\Omega$ is equivalent to the ``affine'' action of $G$ on $V$, that is, we may identify $G$ with a subgroup of $\AGL_d(p)$ and $H$ with an irreducible subgroup of $\GL_d(p)$. Write $K=\norm{\Sym(\Omega)}V$ and observe that $K\cong\AGL_d(p)$.

From~\cite{Pra}, we see that every element of $\mathcal{M}(G)$ is either of HA, PA or AS type, and that $\mathcal{M}_{\mathrm{HA}}(G)=\{K\}$. If $\mathcal{M}(G)=\mathcal{M}_{\mathrm{HA}}(G)$, then the proof follows from Lemma~\ref{basic}. Suppose then that $\mathcal{M}_{\mathrm{AS}}(G)\cup\mathcal{M}_{\mathrm{PA}}(G)\neq \emptyset$.
Observe that, for every $g\in K$ with $g\neq 1$, we have $\fix_\Omega(g)\leq n/p$ and hence, by Lemma~\ref{basic3}, $\orb_\Omega(g)\leq n/2+n/{2p}=(p+1)n/(2p)$. Therefore 
\begin{equation}\label{eq:def7}
|\mathcal{F}(K)|\leq 2^{\frac{(p+1)n}{2p}}|K|.
\end{equation}

Suppose that $\mathcal{M}_{\mathrm{AS}}(G)\neq \emptyset$ and let  $M\in\mathcal{M}_{\mathrm{AS}}(G)$. From~\cite[Proposition~$5.1$]{Pra}, we get that either 
\begin{description}
\item[(i)]$|V|\in \{11,23,27\}$, or
\item[(ii)] $|V|=p=(q^{\ell}-1)/(q-1)$ for some prime power $q$ and some $\ell\geq 2$, $M\cong \PGammaL_\ell(q)$ and the action of $M$ on $V$ in equivalent to the  natural $2$-transitive action of $\PGammaL_\ell(q)$ on the points (or hyperplanes) of the $d$-dimensional  projective space over the finite field of size $q$.
\end{description}
 When Case~(i) holds,  a computation with \texttt{magma} shows that $G$ is the automorphism group of an edge-transitive hypergraph $\mathcal{H}=(\Omega,\mathcal{E})$. Then suppose that Case~(ii)  holds. Since $|\Omega|=|V|=p$ is square-free, $\Sym(\Omega)$ has no primitive subgroups of PA type, that is, $\mathcal{M}_{\mathrm{PA}}(G)=\emptyset$ and $\mathcal{M}(G)=\{K\}\cup \mathcal{M}_{\mathrm{AS}}(G)$. Let $M_1,\ldots,M_s$ be  representatives for the $\Sym(\Omega)$-conjugacy classes of the elements of $\mathcal{M}_\mathrm{AS}(G)$. Thus $M_i\cong \PGammaL_{\ell_i}(q_i)$, for some $\ell_i\geq 2$ and some prime power $q_i=r_i^{f_i}$, and $(q_i,\ell_i)\neq (q_j,\ell_j)$ when $i\neq j$.  Moreover, $$\mathcal{M}_{\mathrm{AS}}(G)=\bigcup_{i=1}^s\{M_i^g\mid g\in \Sym(\Omega), G\leq M_i^g\}.$$ For each $i\in \{1,\ldots,s\}$, write $t_i=|\{M_i^g\mid g\in\Sym(\Omega),G\leq M_i^g\}|$ and let $g_{i,1},\ldots,g_{i,t_i}\in \Sym(G)$ such that $$\{M_i^g\mid g\in\Sym(\Omega),G\leq M_i^g\}=\{M_i^{g_{i,1}},\ldots,M_i^{g_{i,t_i}}\}.$$ 
 We claim that $t_i\leq (p-1)/(\ell_i f_i)$. Observe that $V^{g_{i,j}^{-1}}$ is a Sylow $p$-subgroup of $M_i$, for every $j$. Therefore, for every $j$, there exists $a_{j}\in M_i$ with $V^{g_{i,j}^{-1}a_j}=V$ and  hence $g_{i,j}\in M_i\norm {\Sym(\Omega)}V=M_iK$. Therefore $t_i$ is at most the number of $M_i$-cosets contained in $M_iK$, that is, $t_i\leq|M_iK:M_i|=|K:\norm {M_i}V|=p(p-1)/(p\ell_i f_i)$ and our claim is proved.
 
Write 
\[m_i=
\begin{cases}
\frac{4p}{3q_i}&\textrm{if }\ell_i>2,\\
\max\left\{\frac{4p}{3q_i},2,q_i^{1/e}+1\mid e \textrm{ divides } f_i,\,e>1\right\}&\textrm{if } \ell_i=2.
\end{cases}
\]
From~\cite[Theorem~$1$]{LieSax}, we see that for every $g\in M_i$ with $g\neq 1$  we have $\fix_\Omega(g)\leq m_i$ and hence $\orb_\Omega(g)\leq (p+m_i)/2$. It follows that
\begin{eqnarray*}
\sum_{M\in \mathcal{M}_{\mathrm{AS}}(G)}|\mathcal{F}(M)|&\leq&\sum_{i=1}^s2^{(p+m_i)/2}|M_i|(p-1)/(\ell_i f_i).
\end{eqnarray*}
Now, $|M_i|<p^{\log_2(p)}$, $\ell_i\geq 2$,  $m_i\leq \sqrt{p}+1$ and $s\leq (p-1)/2$ by  Lemma~\ref{nt}. Using these inequalities and Eq.~\eqref{eq:def7}, we get
\begin{eqnarray*}
|\mathcal{S}(G)|&\leq& 2^{(p+1)/2}p(p-1)+2^{(p+\sqrt{p}+1)/2}p^{\log_2(p)}(p-1)^2/4.
\end{eqnarray*}
For $p\geq 139$, a computation (with the help of a computer) shows that the right hand side of this equation is strictly less than $2^{p}$. In particular, when $p\geq 139$, the proof follows from Lemma~\ref{basic-2}.  The only primes less than $139$ of the form $(q^\ell-1)/(q-1)$ with $q$ a prime power and $\ell\geq 2$ are:
$$5,7,13,17,31,73,127.$$  
For these values of $p$ we can explicitly construct with \texttt{magma} an edge-transitive hypergraph  $\mathcal{H}=(\Omega,\mathcal{E})$ such that $G=\Aut(\mathcal{H})$, except when $G$ is one of the groups in Table~\ref{table1}.

\smallskip

Suppose now that $\mathcal{M}_{\mathrm{AS}}(G)=\emptyset$. Thus $\mathcal{M}(G)=\{K\}\cup\mathcal{M}_{\mathrm{PA}}(G)$ and $\mathcal{M}_{\mathrm{PA}}(G)\neq \emptyset$. In particular, $H$ is an imprimitive linear group. Given $\ell\in \{0,\ldots,d\}$, we write 
\[
\left[
\begin{array}{c}
d\\
\ell
\end{array}
\right]_p=\frac{(p^d-1)(p^{d-1}-1)\cdots (p^{d-\ell+1}-1)}{(p^\ell-1)(p^{\ell-1}-1)\cdots (p-1)}.\]
From Lemma~\ref{CDlemma}, the elements of $\mathcal{M}_{\mathrm{PA}}(G)$ are in one-to-one correpondence with the $H$-invariant Cartesian
 decompositions $V=V_1\times \cdots \times V_\ell$ of $V$, where $\ell\geq 2$ and $|V_1|=\cdots =|V_\ell|\geq 5$. Let us denote by $c_\ell$ the number of $H$-invariant Cartesian
  decompositions $V=V_1\times \cdots \times V_\ell$. 
Observe that, for every $i\in \{1,\ldots,\ell\}$, the  $H$-invariant Cartesian decomposition $V=V_1\times \cdots \times V_\ell$ is uniquely determined by  $V_i$: in fact $V_1,\ldots,V_\ell$ can be reconstructed from the $H$-orbit of $V_i$, that is, $\{V_i^h\mid h\in H\}=\{V_1,\ldots,V_\ell\}$. This shows that $c_\ell$  is at most the number of $H$-orbits  of cardinality $\ell$ in the action on the subspaces of $V$ of dimension $d/\ell$, that is, $$c_\ell\leq \frac{1}{\ell}\left[
\begin{array}{c}
d\\
d/\ell
\end{array}
\right]_p.$$
Thus 
$$|\mathcal{M}_{\mathrm{PA}}(G)|\leq\sum_{\substack{\ell\mid d,\,\ell>1\\p^{d/\ell}\geq 5}}\frac{1}{\ell}\left[
\begin{array}{c}
d\\
d/\ell
\end{array}
\right]_p$$
and
\begin{eqnarray}\label{HA1}
|\mathcal{S}(G)|&\leq& 2^{\frac{p+1}{2p}n}\cdot |K|+\sum_{\substack{\ell\mid d\,\ell>1\\p^{d/\ell}\geq 5}}\frac{1}{\ell}\left[
\begin{array}{c}
d\\
d/\ell
\end{array}
\right]_p\mathcal{F}\left(\Sym(p^{d/\ell})\wr\Sym(\ell)\right).
\end{eqnarray}
As $|K|=|\AGL_d(p)|<n^{1+\log_2(n)}$ and $V$ has at most $|V|^{d/2}$ subspaces of dimension at most $d/2$, using Lemma~\ref{basic2} we get
$$|\mathcal{S}(G)|<2^{3n/4}\cdot n^{1+\log_2(n)}+n^{\log_2(n)/2}\cdot F(n).$$
A computation gives that the right hand side of this inequality is less than $2^n$ for $n\geq 10533$. Thus for $n\geq 10533$, we have $\mathcal{S}(G)\subsetneq 2^\Omega$ and we conclude using Lemma~\ref{basic-2}. Similarly, for every $p$ and $d$ with $n=p^d<10533$, we compute the exact value of the right hand side of Eq.~\eqref{HA1} and check when it is strictly less than $2^n$. For these values of $p$ and $d$ the proof follows again from Lemma~\ref{basic-2}. The remaining values are: $d=2$ and $5\leq p\leq 31$, $d=3$ and $p\in \{5,7\}$, $d=4$ and $p\in\{3, 5\}$, $d=6$ and $p\in \{2,3\}$, $d\in\{8,9, 10\}$ and $p=2$. 

For each of the remaining values of $p$ and $d$, and for each divisor $\ell$ of $d$ with $\ell>1$ and $p^{d/\ell}\geq 5$, we may compute explicitly with a computer the value of $$\sum_{C\in \mathcal{C}(\Sym(p^{d/\ell})\wr\Sym(\ell))}2^{\orb_\Omega(C)}$$ and use it (in view of Lemma~\ref{fact3}) as an upper bound for $\mathcal{F}(\Sym(p^{d/\ell})\wr\Sym(\ell))$ in Eq.~\eqref{HA1}. With this improvement we get $|\mathcal{S}(G)|<2^n$, except when $d=2$ and $p\in \{5,7\}$, $d=4$ and $p=3$, $d\in \{6,8\}$ and $p=2$. Finally each affine primitive group with one of these degrees can be checked directly with \texttt{magma}.
\end{proof}

 \section{Primitive groups of PA type}\label{PAtype}

Finally, in this section we prove Theorem~\ref{thrm:main} when $G$ is a primitive group on $\Omega$ of PA type. We start with a preliminary lemma.
\begin{lemma}\label{CartProduc}Let $G$ be a finite primitive group of degree $n$ on $\Omega$. Then $|\mathcal{M}_{\mathrm{PA}}(G)|\leq n^{\log_2(n)/2}$. 
\end{lemma}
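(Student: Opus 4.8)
The plan is to bound the number of elements of $\mathcal{M}_{\mathrm{PA}}(G)$ by counting the combinatorial objects they correspond to, namely the $G$-invariant Cartesian decompositions of $\Omega$, exactly as in Lemma~\ref{CDlemma}. By that lemma, each $M\in\mathcal{M}_{\mathrm{PA}}(G)$ corresponds bijectively to a $G$-invariant Cartesian decomposition $\Omega=\Delta_1\times\cdots\times\Delta_\ell$ with $|\Delta_i|\geq 5$ and $\ell\geq 2$. So it suffices to bound the number of such decompositions, taken over all admissible $\ell$.

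First I would fix $\ell$ and bound the number of $G$-invariant Cartesian decompositions into $\ell$ factors. The key observation, which mirrors the argument used for HA type in Theorem~\ref{typeHA}, is that such a decomposition is highly constrained: the factors $\Delta_1,\ldots,\Delta_\ell$ all have the same size $m=n^{1/\ell}$, and the whole decomposition is determined by rather little data. In the PA setting each Cartesian decomposition is governed by the associated system on the $\ell$ ``coordinates'', and the number of these is controlled by the subgroup structure. The cleanest route is to note that $n=m^\ell$ forces $\ell\leq\log_2(n)$ (since $m\geq 5>2$, indeed $m\geq 2$ already gives $2^\ell\leq m^\ell=n$), so there are at most $\log_2(n)$ possible values of $\ell$ to sum over. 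For each fixed $\ell$, I would argue that a $G$-invariant Cartesian decomposition is determined by the $G$-invariant partition of $\Omega$ into the fibres over the first coordinate (equivalently, by one of the factors together with its $G$-orbit), and hence that the count for a fixed $\ell$ is bounded by the number of relevant $G$-invariant structures, which can be bounded by $n^{\log_2(n)/2}/\log_2(n)$ or a comparable quantity using a generation argument in the style of Lemma~\ref{basic0}.

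The main technical step is to produce a clean per-$\ell$ bound that, when summed over the at most $\log_2(n)$ values of $\ell$, yields the stated $n^{\log_2(n)/2}$. The natural tool is Lemma~\ref{basic0}: a Cartesian decomposition of $\Omega$ into $\ell$ factors induces a system of imprimitivity, and more generally the combinatorial data can be encoded by a bounded number of generators over a point stabiliser, so that the number of candidates is at most $n^{\log_2(n)}$-type. I expect that the sharpest accounting gives, for each $\ell$, a bound that is subexponential in $\log_2(n)$, and that the dominant contribution comes from the smallest value $\ell=2$, where $m=\sqrt{n}$ and the factors are $\sqrt{n}$-subsets; there the number of decompositions is at most the number of ways of choosing one factor, bounded crudely by $\binom{n}{\sqrt{n}}$-type expressions but more efficiently by $n^{\log_2(n)/2}$ via the generation argument.

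The hard part will be packaging the per-$\ell$ count so the geometric-series-type sum over $\ell\in\{2,\ldots,\log_2(n)\}$ collapses to the single clean exponent $n^{\log_2(n)/2}$ rather than something with an extra factor of $\log_2(n)$ or a worse exponent. I would handle this by choosing the per-$\ell$ estimate to already absorb the summation, i.e.\ by showing each term is bounded by $n^{\log_2(n)/2}/\log_2(n)$ (or by showing the $\ell=2$ term dominates and all others are negligible), so that multiplying by the at most $\log_2(n)$ values of $\ell$ recovers $n^{\log_2(n)/2}$. The remaining details are the routine verification, via Lemma~\ref{basic0} applied to the transitive action of $G$ on the set of $\ell$ coordinate factors, that each individual Cartesian decomposition is specified by a subgroup of $G$ containing a point stabiliser and hence counted within the claimed bound.
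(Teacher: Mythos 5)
Your reduction via Lemma~\ref{CDlemma} matches the paper's, but the core counting step contains a genuine error: you assert that a $G$-invariant Cartesian decomposition is determined by ``the $G$-invariant partition of $\Omega$ into the fibres over the first coordinate''. That fibre partition is not $G$-invariant --- indeed $G$ is primitive, so it preserves no nontrivial partition of $\Omega$ at all; rather, $G$ permutes the $\ell$ fibre partitions $\mathcal{C}_1,\ldots,\mathcal{C}_\ell$ transitively among themselves. The paper repairs exactly this point by passing to the socle $N=\soc(G)$: each $\mathcal{C}_i$ \emph{is} a system of imprimitivity for $N$, and since $\mathcal{C}_1^G=\{\mathcal{C}_1,\ldots,\mathcal{C}_\ell\}$, the single partition $\mathcal{C}_1$ determines the whole decomposition. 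Hence $|\mathcal{M}_{\mathrm{PA}}(G)|$ is at most the number of systems of imprimitivity of $N$ with blocks of size at most $n^{1/\ell}\le\sqrt{n}$, and the generation argument in the \emph{proof} of Lemma~\ref{basic0} --- a subgroup $U$ with $N_\omega\le U$ and $|U:N_\omega|\le\sqrt{n}$ is generated by $N_\omega$ together with at most $\log_2(\sqrt{n})$ coset representatives, each chosen among at most $n$ cosets --- yields the bound $n^{\log_2(\sqrt{n})}=n^{\log_2(n)/2}$. Your proposal never introduces the socle, and your alternative suggestion of applying Lemma~\ref{basic0} to the action of $G$ on the set of $\ell$ coordinate factors counts block systems of a degree-$\ell$ action, which does not bound the number of Cartesian decompositions of $\Omega$.

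There is also a structural gap: your plan of bounding the count separately for each $\ell$ and summing over $\ell\in\{2,\ldots,\log_2(n)\}$, hoping each term is at most $n^{\log_2(n)/2}/\log_2(n)$ or that the $\ell=2$ term dominates, is precisely the step you flag as ``the hard part'' and leave unproved, so as written the proposal does not establish the stated inequality. In the paper's argument this difficulty never arises: since every factor of a decomposition with $\ell\ge 2$ parts has size $n^{1/\ell}\le\sqrt{n}$, the decompositions for \emph{all} values of $\ell$ are counted simultaneously by the single family of $N$-block systems with blocks of size at most $\sqrt{n}$, and the exponent $\log_2(n)/2$ comes from the bound $\log_2(\sqrt{n})$ on the number of generators needed over the point stabiliser, not from any balancing of a sum over $\ell$.
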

\begin{proof}
From Lemma~\ref{CDlemma}, it suffices to show that $\Omega$ admits at most $n^{\log_2(n)/2}$ $G$-invariant Cartesian decompositions. 

Let $N$ be the socle of $G$. Let $\Omega=\Omega_1\times \cdots \times \Omega_\ell$ be a $G$-invariant Cartesian decomposition with $\ell\geq 2$. For each $i\in\{1,\ldots,\ell\}$, set 
$$\mathcal{C}_i=\{\{(\omega_1,\ldots,\omega_{i-1},\varepsilon,\omega_{i+1},\ldots,\omega_{\ell})\mid \varepsilon\in \Omega_i\}\mid \omega_j\in \Omega_j\textrm{ for each }j\in \{1,\ldots,\ell\}\setminus\{i\}\}.$$
Observe that $\mathcal{C}_1,\ldots,\mathcal{C}_\ell$ are systems of imprimitivity for $N$ (with $|\Omega|^{(\ell-1)/\ell}$ blocks of size $|\Omega|^{1/\ell}$) and that $G$ acts transitively on $\{\mathcal{C}_1,\ldots,\mathcal{C}_\ell\}$. In particular, $\mathcal{C}_1,\ldots,\mathcal{C}_\ell$ (and hence the decomposition $\Omega_1\times \cdots\times \Omega_\ell$) is uniquely determined by $\mathcal{C}_1$ because $\mathcal{C}_1^G=\{\mathcal{C}_1^g\mid g\in G\}=\{\mathcal{C}_1,\ldots,\mathcal{C}_\ell\}$. 

This shows that $|\mathcal{M}_{\mathrm{PA}}(G)|$ is at most the number of systems of imprimitivity for $N$ with blocks of size at most $\sqrt{n}$. Now the proof follows by the proof of Lemma~\ref{basic0}.
\end{proof}

Again, Lemma~\ref{CartProduc} should not be taken too seriously, but it is perfect for our application in the proof of Theorem~\ref{typePA}. 
\begin{theorem}\label{typePA}
Let $G$ be a finite primitive group on $\Omega$ of PA type. Then there exists an edge-transitive hypergraph $\mathcal{H}=(\Omega,\mathcal{E})$ with $G=\Aut(\mathcal{H})$.
\end{theorem}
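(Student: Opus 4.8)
The plan is to prove that $\mathcal{S}(G)\subsetneq 2^\Omega$, after which Lemma~\ref{basic-2} produces the required edge-transitive hypergraph with $G=\Aut(\mathcal{H})$ and we are done. First I would record the structure of a PA type group: its socle is $T^k$ for a non-abelian simple group $T$ and some $k\geq 2$, and the action is equivalent to the product action on $\Gamma^k$, where $\Gamma$ is the point set of a primitive almost simple group of degree $m=|\Gamma|\geq 5$; thus $n=m^k\geq 25$. The overall goal is the single inequality $|\mathcal{S}(G)|\leq\sum_{M\in\mathcal{M}(G)}|\mathcal{F}(M)|<2^n$ for all but finitely many $n$, so I would split $\mathcal{M}(G)$ into its four pieces $\mathcal{M}_{\mathrm{PA}}(G)$, $\mathcal{M}_{\mathrm{AS}}(G)$, $\mathcal{M}_{\mathrm{SD}}(G)$, $\mathcal{M}_{\mathrm{HA}}(G)$ using~\cite{Pra} and bound the contribution of each.

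The dominant contribution comes from the product-action overgroups. Here I would invoke Lemma~\ref{CartProduc} to get $|\mathcal{M}_{\mathrm{PA}}(G)|\leq n^{\log_2(n)/2}$, and Lemma~\ref{CDlemma} to identify each $M\in\mathcal{M}_{\mathrm{PA}}(G)$ with a $G$-invariant Cartesian decomposition, so that $M$ is permutation isomorphic to $\Sym(m')\wr\Sym(\ell)$ with $m'\geq 5$ and $m'^\ell=n$. Lemma~\ref{basic2} then gives $|\mathcal{F}(M)|\leq F(n)$ whenever $\ell\geq 3$, or $\ell=2$ with $m'\geq 36$; the finitely many decompositions with $\ell=2$ and $5\leq m'<36$ force $n<36^2$ and are pushed into the computational range. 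This yields a contribution of at most $n^{\log_2(n)/2}F(n)$, and since the dominant term of $F(n)$ behaves like $2^{n-\frac{5}{3}\sqrt{n}}$ while $n^{\log_2(n)/2}=2^{(\log_2 n)^2/2}$, the product is $o(2^n)$ once $\tfrac{5}{3}\sqrt{n}>\tfrac{1}{2}(\log_2 n)^2$.

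For the remaining three pieces I would use that, by~\cite{Pra}, a PA type group has very few maximal overgroups of AS, SD or HA type. For an HA overgroup $M\cong\AGL_d(p)$ the bound $\fix_\Omega(g)\leq n/p\leq n/2$ gives $|\mathcal{F}(M)|\leq 2^{3n/4}|M|$ via Lemmas~\ref{basic3} and~\ref{fact3}, exactly as in Section~\ref{HAtype}. For an AS overgroup the action on $\Omega$ is a primitive almost simple action, so the fixed-point-ratio estimate of~\cite{GM} (and, in the small-rank Lie case, \cite{LieSax}) gives $\fix_\Omega(g)\leq\tfrac{4}{7}n$ for every $g\in M\setminus\{1\}$, whence $|\mathcal{F}(M)|\leq 2^{\frac{n}{2}+\frac{2n}{7}}|M|=2^{\frac{11n}{14}}|M|$; the same bound should cover the diagonal overgroups once the corresponding fixed-point ratio is controlled. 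Since each such $|M|$ is subexponential in $n$ and the number of these overgroups is polynomially bounded, the total is again negligible against $2^n$. Combining the three estimates gives $|\mathcal{S}(G)|<2^n$ beyond an explicit threshold, and for the finitely many remaining pairs $(m,k)$ (together with all $\ell=2$, $m'<36$ cases) I would verify $G=\Aut(\mathcal{H})$ directly with \texttt{magma}, as in the earlier sections; no genuine exceptions arise, consistent with the exception-free statement.

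I expect the main obstacle to be the non-product-action maximal overgroups: one must extract from~\cite{Pra} a complete and usable description of $\mathcal{M}_{\mathrm{AS}}(G)\cup\mathcal{M}_{\mathrm{SD}}(G)\cup\mathcal{M}_{\mathrm{HA}}(G)$ for PA type and confirm that in every case the fixed-point ratio is bounded away from $1$, so that the $2^{11n/14}$-type estimate applies. The simple-diagonal overgroups, whose action on $\Omega$ is not immediately covered by the $\tfrac{4}{7}$ bound of~\cite{GM}, are the delicate point and may need a separate fixed-point argument. A secondary nuisance is keeping the computational range genuinely finite and manageable, which rests precisely on the gap between the $2^{n-\frac{5}{3}\sqrt{n}}$ behaviour of $F(n)$ and the sub-exponential overhead $n^{\log_2(n)/2}$.
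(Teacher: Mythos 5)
Your overall strategy---bound $|\mathcal{S}(G)|$ below $2^n$ and invoke Lemma~\ref{basic-2}, with \texttt{magma} for small degrees---is indeed the paper's, but there is a genuine gap at exactly the point you flag as the obstacle: the non-product-action overgroups. By \cite[Section~7 and Proposition~7.1]{Pra}, a primitive group of PA type has $\mathcal{M}(G)=\mathcal{M}_{\mathrm{PA}}(G)$ in every case except when the almost simple component is $\PSL_2(7)$ in its degree-$8$ action, where additionally $\mathcal{M}_{\mathrm{HA}}(G)\neq\emptyset$ and each $K\in\mathcal{M}_{\mathrm{HA}}(G)$ is isomorphic to $\AGL_{3\ell}(2)$. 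So your planned fixed-point-ratio treatment of AS and SD overgroups is vacuous (there are none), and the entire difficulty sits in the step you dismiss with ``the number of these overgroups is polynomially bounded'': you prove no bound whatsoever on $t=|\mathcal{M}_{\mathrm{HA}}(G)|$, and the per-overgroup estimate $|\mathcal{F}(K)|\leq 2^{3n/4}|K|$ is useless without one. The paper's proof of Eq.~\eqref{eq:def10}, namely $t\leq 32^\ell\,|\GL_{3\ell}(2)|$, is the technical heart of this section: a double pigeon-hole argument using that each conjugate $N^{g^{-1}}$ of the socle complements $V=\soc(K)$ and stabilises a decomposition of $V$ into $\ell$ subspaces of dimension $3$, that $\AGL_3(2)$ contains exactly $16$ transitive subgroups isomorphic to $\PSL_2(7)$, and that $|\norm{\Sym(\Omega)}N|=|\PGL_2(7)\wr\Sym(\ell)|$. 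Note also that this bound is quasipolynomial in $n=8^\ell$ (roughly $n^{\log_2 n}$), not polynomial, so even the shape of your claim is off, although $2^{3n/4}|K|\cdot 32^\ell\,|\GL_{3\ell}(2)|<2^n$ does hold once $\ell\geq 4$, after which $\PSL_2(7)^2$ and $\PSL_2(7)^3$ go to the computer.

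A secondary weakness is your uniform reliance on Lemma~\ref{CartProduc}: the inequality $n^{\log_2(n)/2}F(n)<2^n$ only takes over around $n\geq 10533$, and the paper reserves that coarse count for the rare product-decomposable components. In the main (product-indecomposable) case it instead uses the correspondence between $\mathcal{M}_{\mathrm{PA}}(G)$ and the systems of imprimitivity of the top group $L$, so Lemma~\ref{basic0} gives $|\mathcal{M}(G)|\leq \ell^{\log_2(\ell)}$ and the threshold drops to $n\geq 1290$. You thereby lose two things: for $\ell$ prime---in particular the troublesome $\ell=2$, $m'<36$ decompositions escaping Lemma~\ref{basic2}---there is a \emph{unique} maximal overgroup, so Lemma~\ref{basic} (which via \cite{Seress} even covers the possibility $\mathcal{F}(M)=2^\Omega$, something your counting scheme cannot exclude) applies with no computation; and the residual \texttt{magma} work shrinks to socles $\Alt(5)^4$, $\PSL_2(7)^2$, $\PSL_2(7)^3$ and $\Alt(6)^2$ of degree $36^2$, rather than your ``computational range'' of every PA group of degree below roughly $10^4$, which is far heavier than anything the paper actually runs. (Minor: the dominant term of $F(n)$ is $2^{n-\sqrt{n}+O(\log n)}$, coming from $F_{1,0}^2(n)$, not $2^{n-\frac{5}{3}\sqrt{n}}$; this does not affect the asymptotics you need.)
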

 \begin{proof}
 Let $N$ be the socle of $G$ and let $n$ be the cardinality of $\Omega$. Then $N=T_1\times \cdots \times T_\ell$, where $T_1,\ldots,T_\ell$ are pair-wise isomorphic non-abelian simple groups and $\ell\geq 2$. Moreover, $G\leq H\wr L$, $\Omega=\Delta^\ell$, $H$ is a primitive group on $\Delta$ of AS type with socle isomorphic to $T_1$, $L$ is transitive of degree $\ell$ and the action of $G$ on $\Omega$ is the natural product action of $G$ on $\Delta^\ell$.

Suppose that $(H,|\Delta|)\neq (\PSL_2(7),8)$ and that $H$ is product indecomposable. Then from~\cite[Section~$7$]{Pra},  $\mathcal{M}(G)=\mathcal{M}_{\mathrm{PA}}(G)$  
and the elements of $\mathcal{M}_{\mathrm{PA}}(G)$  permutation isomorphic to the wreath product $\Sym(|\Omega|^{1/\ell'})\wr \Sym(\ell')$ (for some divisor $\ell'$ of $\ell$ with $\ell'>1$) are in one-to-one correspondence with the systems of imprimitivity of $L$ with $\ell'$ blocks of size $\ell/\ell'$. (Exactly as in Section~\ref{HCCDPA} for groups of HC, CD and TW type, this correspondence is natural: if $L$ has a system of imprimitivity with $\ell'$ blocks then the inclusion of $L$ in $ \Sym(\ell/\ell')\wr\Sym(\ell')$ gives rise to  a natural inclusion of $G$ in $(H\wr \Sym(\ell/\ell'))\wr\Sym(\ell')\leq \Sym(|\Delta|^{\ell/\ell'})\wr\Sym(\ell')$.)  
By Lemmas~\ref{basic0} and~\ref{basic2}, we have 
$|\mathcal{M}(G)|\leq \ell^{\log_2(\ell)}$ and
\begin{equation}\label{PAeq}
|\mathcal{S}(G)|\leq \ell^{\log_2(\ell)}F(n).
\end{equation}

Suppose that $\ell$ is prime. Then $G$ is contained in a unique maximal subgroup of $\Sym(\Omega)$ (namely $\Sym(\Delta)\wr\Sym(\ell)$) and the proof follows from Lemma~\ref{basic}. In particular, we may assume that $\ell\geq 4$. Observe that $\ell\leq\log_5(n)$. Using this upper bound for $\ell$ and Eq.~\eqref{PAeq}, it follows that $|\mathcal{S}(G)|<2^{n}$ for $n\geq 1290$. Therefore when $n\geq 1290$ we conclude by Lemma~\ref{basic-2}. Assume that $n\leq 1289$. Observe that  $5\leq |\Delta|=n^{1/\ell}\leq n^{1/4}<6$ and $1290^{1/5}<5$. Hence $|\Delta|=5$, $\ell=4$ and $N\cong \Alt(5)^4$. Now, the primitive groups of PA type with socle  $\Alt(5)^4$ and degree $5^4$ can be checked directly with \texttt{magma}.

\smallskip
   
Suppose that $H=\PSL_2(7)$ and $|\Delta|=8$. Now,  from~\cite[Section~$7$]{Pra},  $\mathcal{M}(G)=\mathcal{M}_{\mathrm{HA}}(G)\cup \mathcal{M}_{\mathrm{PA}}(G)$. Moreover, since $H$ is product indecomposable, we have $|\mathcal{M}_{\mathrm{PA}}(G)|\leq \ell^{\log_2(\ell)}$ as in the previous case. 
In particular, when $\ell\geq 3$, Lemma~\ref{basic2} gives
\begin{equation}\label{eq:def11}
\sum_{M\in\mathcal{M}_{\mathrm{PA}}(G)}|\mathcal{F}(M)|\leq \ell^{\log_2(\ell)}F(8^\ell).
\end{equation}
  
Let $K\in \mathcal{M}_{\mathrm{HA}}(G)$. Then $K\cong \AGL_{3\ell}(2)$ by ~\cite[Proposition~$7.1$]{Pra}. As $\Sym(\Omega)$ contains a unique conjugacy class of primitive groups isomorphic to $\AGL_{3\ell}(2)$, we have $\mathcal{M}_{\mathrm{HA}}(G)=\{K^g\mid g\in \Sym(\Omega), G\leq K^g\}$. Write $t=|\mathcal{M}_{\mathrm{HA}}(G)|$. We prove the following inequality: 
\begin{equation}\label{eq:def10}
t\leq 16^\ell\frac{|\GL_{3\ell}(2)|}{|\GL_3(2)\wr\Sym(\ell)|}|\mathrm{PGL}_2(7)\wr\Sym(\ell)|.
\end{equation}
(Observe that the right hand side of Eq.~\eqref{eq:def10} is simply $32^\ell\cdot|\GL_{3\ell}(2)|$.)
  We argue by contradiction and we assume that Eq.~\eqref{eq:def10} is false. Let $g_1,\ldots,g_t\in \Sym(\Omega)$ such that $\mathcal{M}_{\mathrm{HA}}(G)=\{K^{g_1},\ldots,K^{g_t}\}$. In particular, $G^{g_1^{-1}},\ldots,G^{g_t^{-1}}\leq K$ and hence $N^{g_1^{-1}},\ldots, N^{g_t^{-1}}\leq K$. Let $V$ be the socle of $K$, thus $V$ is an elementary abelian group of order $2^{3\ell}$. As $N^{g_i^{-1}}\cap V\unlhd G^{g_i^{-1}}$, we have $N^{g_i^{-1}}\cap V=1$ and hence $VN^{g_i^{-1}}\cong V\rtimes N^{g_i^{-1}}$. As $G^{g_i^{-1}}\leq K$, we may identify the $G^{g_i^{-1}}$-set $\Omega$ with $V$. Thus, since $N^{g_i^{-1}}$ fixes each direct factor of a Cartesian decomposition  of $\Omega$, we may write $V=V_1\times \cdots \times V_\ell$ where $V_1,\ldots,V_\ell$ are $N^{g_i^{-1}}$-invariant subspaces of $V$ of dimension $3$. Observe that $V$ has $|\GL_{3\ell}(2):\GL_3(2)\wr\Sym(\ell)|$ Cartesian decompositions $V=V_1\times \cdots\times V_\ell$ with $\dim V_{1}=\cdots =\dim V_\ell=3$. In particular, as Eq.~\eqref{eq:def10}  is false, from the pigeon-hole principle there exists a Cartesian decomposition $V=V_1\times \cdots \times V_\ell$ (with $\dim V_i=3$ for each $i$) and a subset $X$ of $\{1,\ldots,t\}$ such that $|X|>16^\ell|\PGL_2(7)\wr\Sym(\ell)|$ and $N^{g_{x}^{-1}}$ fixes each of the subspaces $V_1,\ldots,V_\ell$ for every $x\in X$. Therefore, for every $x,y\in X$, we have $VN^{g_x^{-1}}=VN^{g_y^{-1}}$.

Now, $VN^{g_i^{-1}}=(V_1 T_1^{g_i^{-1}})\times \cdots \times (V_\ell T_\ell^{g_i^{-1}})\cong \AGL_3(2)^\ell$. A computation shows that $\AGL_3(2)$ contains $16$ transitive subgroups isomorphic to $\PSL_2(7)$. Therefore $VN^{g_i^{-1}}$ contains $16^\ell$ transitive subgroups isomorphic to $\PSL_2(7)^\ell$. Thus, from the pigeon-hole principle, there exists a subset $Y$ of $X$ such that $|Y|>|\PGL_2(7)\wr\Sym(\ell)|$ and $N^{g_y^{-1}}=N^{g_z^{-1}}$, for every $y,z\in Y$. Fix $y_0\in Y$. Now, $g_{y_0}^{-1}{g_y}\in \norm{\Sym(\Omega)}N$ for every $y\in Y$. As $|\norm{\Sym(\Omega)}N|=|\PGL_2(7)\wr\Sym(\ell)|$, from the pigeon-hole principle we  have  $g_{y_0}^{-1}g_y=g_{y_0}^{-1}g_z$ for some $y,z\in Y$ with $y\neq z$. Thus $g_y=g_z$, a contradiction. Now Eq.~\eqref{eq:def10} is proved.

Observe that every element of $K\cong \AGL_{3\ell}(2)$ fixes at most $n/2$ points and hence  by Lemma~\ref{fact3} $|\mathcal{F}(K)|\leq 2^{3n/4}|K|$.

Now, from Eqs.~\eqref{eq:def11} and~\eqref{eq:def10}, for $\ell\geq 3$ we get
\begin{eqnarray*}
|\mathcal{S}(G)|&\leq& \sum_{M\in \mathcal{M}(G)}|\mathcal{F}(M)|\leq |\mathcal{F}(K)||\mathcal{M}_{\mathrm{HA}}(G)|+F(n)|\mathcal{M}_{\mathrm{PA}}(G)|\leq2^{3n/4}|K|32^\ell+\ell^{\log_2(\ell)}F(8^{\ell}).
\end{eqnarray*}   
A computation shows that the right hand side of this equation is less that $2^n$ when $\ell\geq 4$. In particular, when $\ell\geq 4$, we conclude with Lemma~\ref{basic-2}. Finally, the primitive groups of PA type with socle $\PSL_2(7)^2$ (respectively $\PSL_2(7)^3$) and degree $8^2$ (respectively $8^3$) can be (as usual) checked with \texttt{magma}.
     
\smallskip

Finally suppose that $H$ is product decomposable. Definition~\ref{def1} gives  $T_1\cong\Alt(6)$ and $|\Delta|=36$, or $T_1\cong M_{12}$ and $|\Delta|=144$, or $T_1\cong \Sp_4(q)$ with $q>2$ even and $|\Delta|=(q^2(q^2-1)/2)^2\geq (4^2\cdot(4^2-1)/2)^2=14400$. From~\cite[Proposition~$7.1$]{Pra} we get $\mathcal{M}(G)=\mathcal{M}_{\mathrm{PA}}(G)$ and  from Lemma~\ref{CartProduc} we get $|\mathcal{M}_{\mathrm{PA}}(G)|\leq
 n^{\log_2(n)/2}$. Thus 
 $|\mathcal{S}(G)|\leq n^{\log_2(n)/2}F(n)$. A computation shows that, for $n\geq 10533$,  $n^{\log_2(n)/2}F(n)<2^n$ and hence for these values of $n$ the proof follows from Lemma~\ref{basic-2}. Assume that $n<10533$. As $10533^{1/3}<36$, we must have $\ell=2$. Then $|\Delta|<10533^{1/2}$ and hence $|\Delta|\leq 102$. Thus $(T_1,|\Delta|,\ell,n)= (\Alt(6),36,2,36^2)$. A computation with \texttt{magma} shows that $\Omega$ has $4$ $N$-invariant Cartesian decompositions. As $4\cdot F(n)<2^{n}$, we conclude again with Lemma~\ref{basic-2}.
\end{proof}

\section{Concluding remarks}\label{theend}

We finish by bringing together the various threads to prove Theorem~\ref{thrm:main}.
\begin{proof}[Proof of Theorem~$\ref{thrm:main}$]
The proof follows immediately from Theorems~\ref{typeHS-SD},~\ref{typeHS-SD-TW},~\ref{AS},~\ref{typeHA} and~\ref{typePA}.
\end{proof}

\thebibliography{10}

\bibitem{Asch1}M.~Aschbacher, Overgroups of primitive groups, \textit{J. Aust. Math. Soc.} \textbf{87} (2009), 37--82.

\bibitem{Asch}M.~Aschbacher, Overgroups of primitive groups II, \textit{J. Algebra} \textbf{322} (2009), 1586--1626.

\bibitem{Ba1}L.~Babai, The probability of generating the symmetric group, \textit{J. Comb. Theory Ser. A}\textbf{52} (1989), 148--153.

\bibitem{BC}L.~Babai, P.~J.~Cameron, Most primitive groups are full automorphism groups of edge-transitive hypergraphs, \href{http://arxiv.org/pdf/1404.6739v1.pdf}{arXiv:1404.6739}.

\bibitem{magma} W.~Bosma, J.~Cannon, C.~Playoust, The Magma algebra system. I. The user language, \textit{J. Symbolic Comput.} \textbf{24} (3-4) (1997), 235--265. 

\bibitem{BCH}J.~Bray, D.~Holt, C.~Roney-Dougal, \textit{The maximal subgroups of the low-dimensional classical groups}, London Mathematical Society Lecture Notes Series \textbf{407}, Cambridge University Press, Cambridge, 2013.



\bibitem{CNS}P.~J.~Cameron, P.~M.~Neumann, J.~Saxl, On groups with no regular orbits on the set of subsets, \textit{Arch. Math.} \textbf{43} (1984), 295--296.

\bibitem{ATALS}J.~H.~Conway, R.~T.~Curtis, S.~P.~Norton, R.~A.~Parker, R.~A.~Wilson, \textit{The Atlas of finite groups}, Clarendon Press, Oxford, 1985. 

\bibitem{DVS}F.~Dalla Volta, J.~Siemons, Orbit equivalence and permutation grousp defined by unordered relations, \textit{J. Algebr. Comb.} \textbf{35} (2012), 547--564.

\bibitem{DM}J.~D.~Dixon, B.~Mortimer, \textit{Permutation Groups}, Graduate Texts in Mathematics \textbf{163}, Springer-Verlag, New York, 1996.

\bibitem{GM}R.~Guralnick, K.~Magaard, On the minimal degree of a primitive permutation group, \textit{J. Algebra} \textbf{207} (1998), 127--145.

\bibitem{Kantor}W.~M.~Kantor, $k$-homogeneous groups, \textit{Math. Z.} \textbf{124}, 261--265.

\bibitem{kleidman}P.~B.~Kleidman, \textit{The subgroup structure of some finite simple groups}, Ph.D. thesis, University of Cambridge,
1987.

\bibitem{KL}P.~Kleidman, M.~Liebeck, \textit{The subgroup structure of the finite classical groups}, London Mathematical Society Lecture Notes Series \textbf{129}, Cambridge University Press, 1990.


\bibitem{LieSax}M.~W.~Liebeck, J.~Saxl, Minimal degrees of primitive permutation groups, with an application to monodromy groups of covers of Riemann surfaces, \textit{Proc. London Math. Soc. (3)} \textbf{63} (1991), 266--314.


\bibitem{LPS1}M.~W.~Liebeck, C.~E.~Praeger, J.~Saxl, A classification of the maximal subgroups of the finite alternating and symmetric groups, \textit{J. Algebra} \textbf{111} (1987), 365--383.

\bibitem{LPS2}M.~W.~Liebeck, C.~E.~Praeger, J.~Saxl, On the O'Nan-Scott theorem for finite primitive permutation groups, \textit{J. Austr. Math. Soc.} \textbf{44} (1988), 389--396.




\bibitem{Pra}C.~E.~Praeger, The inclusion problem for finite primitive permutation groups, \textit{Proc. London Math. Soc. (3)} \textbf{60} (1990), 68--88.


\bibitem{Seress}\'{A}.~Seress, Primitive groups with no regular orbits on the set of subsets, \textit{Bull. London Math. Soc.} \textbf{29} (1997), 697--704.
\end{document}